\newcommand{\N}{\mathbb{N}}
\newcommand{\Z}{\mathbb{Z}}
\newcommand{\R}{\mathbb{R}}
\newcommand{\C}{\mathbb{C}}
\newcommand{\quat}{\mathbb{H}}
\renewcommand{\S}{{\mathrm{S}}^{2N-1}}
\newcommand{\Hab}{\mathcal{H}^{\alpha,\beta}(\C^N)}
\newcommand{\Vll}{V_n^{l,l'}}
\newcommand{\GLNC}{\ensuremath{\mathrm{GL}(N,\C)}}
\newcommand{\SpnR}{\ensuremath{\mathrm{Sp}(n,\R)}}
\newcommand{\SpnC}{\ensuremath{\mathrm{Sp}(n,\C)}}
\newcommand{\HC}{\ensuremath{\mathrm{H}_{\C}^{2m+1}}}
\renewcommand{\Re}{\mathop{\rm{Re}}}
\newcommand{\scal}[2]{\langle #1,#2\rangle}
\theoremstyle{plain}
\newtheorem{theorem}{Theorem}
\newtheorem{lemma}{Lemma}
\newtheorem{proposition}{Proposition}
\theoremstyle{definition}
\newtheorem{definition}{Definition}
\newtheorem{remark}{Remark}
\begin{document}

\title[Degenerate principal series of $\mathrm{Sp}(n,\C)$]{On the degenerate principal series of complex symplectic groups}

\author{Pierre Clare}
\address{Pierre Clare\\ The Pennsylvania State University\\ Department of Mathematics\\ McAllister Building\\ University Park, PA - 16802}
\email{clare@math.psu.edu}
\thanks{This work was primarily supported by a JSPS postdoctoral fellowship. Support was also provided by the MAPMO and the Pennsylvania State University.}
\subjclass[2010]{22D10, 22D30, 22E45, 22E46}
\keywords{Small representations, principal series, symplectic groups, Knapp-Stein operators, branching laws.}

\begin{abstract}
We apply techniques introduced by Clerc, Kobayashi, \O rsted and Pevzner to study the degenerate principal series of $\SpnC$. An explicit description of the $K$-types is provided and Knapp-Stein normalised operators are realised as symplectic Fourier transforms, and their $K$-spectrum explicitely computed. Reducibility phenomena are analysed in terms of $K$-types and eigenvalues of intertwining operators. We also construct a new model for these representations, in which Knapp-Stein intertwiners take an algebraic form.
\end{abstract}

\maketitle

\section*{Introduction}

\subsection*{Background and purpose}

Among the representations of reductive Lie groups, the so-called \textit{small representations} have received a lot of attention for several years. After the accomplishments of the 90's, mainly obtained through algebraic methods (see the introduction and the references in \cite{Alganalmin}), recent progress has stemmed from the developement of new techniques in geometric analysis. Leading work in this direction was the series \cite{KOadv03I,KOadv03II,KOadv03III}, dealing with the minimal representation of general indefinite orthogonal groups.

More recently, new techniques were introduced in this area by J.-L. Clerc, T. Kobayashi, B. \O rsted and M. Pevzner in relation with various problems in classical analysis \cite{BernsteinRez} and representation theory \cite{Kobayashi_small_GL}. The first of these works is devoted to the computation of certain triple integrals, and features the study of certain intertwining operators related to representations of the real symplectic group. More precisely, it is observed that some Knapp-Stein operators can be realised as Fourier transforms once normalised. This idea was already used in the work of A. Unterberger in the case of $\mathrm{SL}(2,\R)$, see \cite{Unterbergerlivre}. It was extended to $\mathrm{SL}(n,\R)$ in \cite{PevznerUnter} and to $\mathrm{Sp}(n,\R)$ for the first time in \cite{KOPUWeylcalc}. It is taken further in \cite{Kobayashi_small_GL} where, among many other results, a complete description of Knapp-Stein intertwiners between the degenerate principal series of $\mathrm{Sp}(n,\R)$ is carried out. This point of view allows to establish very precise statements, such as $K$-type formulas and  explicit computations of the $K$-spectrum of intertwining operators. Another feature of \cite{Kobayashi_small_GL} is the construction of what is named there the \textit{non-standard model} for degenerate principal series. As a first application, this apparently new picture allows to define Knapp-Stein operators by a rather simple algebraic formula.

The present article aims at adapting those techniques to the case of the complex symplectic group. Among the parabolically induced representations of $\SpnC$, those coming from maximal parabolic subgroups may be considered as the most degenerate, since functions on the corresponding flag manifold depend on the smallest possible set of parameters. These representations have first been investigated by K. I. Gross in \cite{Gross}.

Most of our arguments are rather directly inspired by the results in \cite{BernsteinRez} and \cite{Kobayashi_small_GL}, dealing with real symplectic groups. However, we tried to provide a self-contained and detailed presentation of these techniques while adapting them to the complex case. Moreover, the isotypic decomposition of the quaternionic orthogonal group action on square-integrable functions over the unit sphere, studied in Section~\ref{RCHsph}, turns out to be slightly more delicate than the real and complex ones. It is also worth noticing Theorem~\ref{Ktypethm} accounts more precisely for the reducibility phenomena than the original description in Gross' paper \cite{Gross}. In particular, the characterisations by $K$-types and eigenspaces of the algebraic intertwiners in the non-standard model both seem new.

Let us finally mention that the study of explicit Knapp-Stein intertwiners as geometric transforms and the computation of their $K$-spectrum such as the one provided in Proposition~\ref{spec} are a current object of interest. Indeed, analogous results were recently obtained in~\cite{PasquOlaf} for special linear groups, using techniques of \cite{Specgeneratop}.

\subsection*{Outline}
The article is organised as follows: general notations are fixed and elementary facts regarding degenerate principal series of the complex symplectic groups are stated in Section~\ref{setting}. In Section~\ref{FourierKnappStein}, we introduce certain Fourier transforms, establish some of their elementary properties and use them to normalise Knapp-Stein operators in Proposition~\ref{propFsympT}. In Section~\ref{Ktyp}, we study the branching law of the degenerate principal series representations of $\mathrm{Sp}(n,\C)$ with respect to the maximal compact subgroup $K=\mathrm{Sp}(n)$: we describe $K$-types in Proposition~\ref{Ktypeformula} and compute the eigenvalues of Knapp-Stein operators in Proposition~\ref{spec}. As a result, we are able to analyse the reducible elements in the degenerate principal series in terms of $K$-types and eigenspaces of te Knapp-Stein intertwiners in Theorem~\ref{Ktypethm}. Finally, Section~\ref{intertwiners} is devoted to the description of the \textit{non-standard} model of the degenerate principal series in the sense of \cite{Kobayashi_small_GL}. The main result of this section is the computation of the normalised Knapp-Stein operators in this picture: Theorem~\ref{algKS} establishes that the intertwiners are defined by an algebraic formula in this setting.

\section{Setting and notations}\label{setting}

\subsection{The complex symplectic group}

For any integer $p\geq1$, let $I_p$ be the identity matrix of size $p$ and let brackets denote the associate bilinear form on $\C^p$: \[\scal{X}{Y}=\sum_{k=1}^px_ky_k\] for $X=(x_1,\ldots,x_p)$ and $Y=(y_1,\ldots,y_p)$.

Throughout this article, $n$ shall be a fixed positive integer and $N=2n$, so that a vector $X$ in $\C^N\simeq\C^n\times\C^n$ can naturally be written $X=(X_1,X_2)$. The \emph{complex symplectic form} on $\C^N$ is defined by \[\omega_n(X,Y)=\scal{X_2}{Y_1}-\scal{X_1}{Y_2},\] that is $\omega_n(X,Y)=\scal{X}{JY}$ where \[J=\left[\begin{array}{c|c}0&-I_n\\\hline \;I_n\;&0\end{array}\right].\] Abusing notations, we will usually drop the subscript indicating the dimension and write $\omega\equiv\omega_n$ when no confusion may result.

By definition, the \emph{complex symplectic group} is the group of complex invertible matrices preserving $\omega$: \[\SpnC=\left\{g\in\GLNC\;\left|\;\forall X,Y\in\C^N,\right.\;\omega(gX,gY)=\omega(X,Y)\right\}.\]
Equivalently, $\SpnC$ is the subgroup of elements $g\in\GLNC$ subject to the relation $^{t}gJg=J$.

From now on, $G$ will denote the complex symplectic group defined above. Restricting the usual Cartan involution of $\GLNC$ yields a Cartan involution of $G$. As a consequence, $K=\mathrm{U}(N)\cap\SpnC$ is a maximal compact subgroup of $G$, also called the \emph{compact symplectic group} and denoted by $\mathrm{Sp}(n)$.

\subsection{Maximal parabolic subgroup of Heisenberg type}

Let us recall some facts regarding complex Heisenberg groups. Let $m=n-1$ and consider \[\HC=\left\{(s,X)\in\C\times\C^{2m}\right\}\] equipped with the product \[(s,X)(s',X')=\left(s+s'+\frac{1}{2}\omega(X,X')\,,\,X+X'\right),\] where $\omega\equiv\omega_m$ denotes the complex symplectic form on $\C^{2m}$.

The group $G$ acts naturally on $\C^N$ by linear applications, hence also on the complex projective space $\mathrm{P}^{N-1}\C$. The stabiliser in $G$ of a point in $\mathrm{P}^{N-1}\C$ is a maximal parabolic subgroup $P$ with Langlands decomposition \[P=MA\bar{N}\simeq\left(\C^\times.\mathrm{Sp}(m,\C)\right)\ltimes\HC.\]
Elements in the Cartan-stable Levi component $L=MA$ are of the form \[l(a,S)=\left[\begin{array}{cccc}a&0&0&0\\0&s_{11}&0&s_{12}\\0&0&a^{-1}&0\\0&s_{21}&0&s_{22}\end{array}\right]\] with $a\in\C^\times$ and $S=\left[\begin{array}{cc}s_{11}&s_{12}\\s_{21}&s_{22}\end{array}\right]\in\mathrm{Sp}(m,\C)$.

The Lie algebra $\mathfrak{g}$ of $G$ then admits a Gelfand-Naimark decomposition $\mathfrak{g}=\mathfrak{n}+\mathfrak{m}+\mathfrak{a}+\bar{\mathfrak{n}}$ and the analytic subgroup $N$ of $G$ with Lie algebra $\mathfrak{n}$ is another copy of $\HC$ which embeds in $G$ \textit{via} \begin{equation}\label{embed}\left(s,(X_1,X_2)\right)\longmapsto\left[\begin{array}{cccc}1&0&0&0\\X_1&I_m&0&0\\2s&^{t}X_2&1&-^{t}X_1\\X_2&0&0&I_m\end{array}\right]\end{equation}

\subsection{Degenerate principal series}\label{degeneratePS}

For $a\in\C^\times$, we denote $[a]=\frac{a}{|a|}$. Let $(\mu,\delta)\in\C\times\Z$. Such a couple defines a character $\chi_{\mu,\delta}$ of $P$ by \[\chi_{\mu,\delta}\left(l(a,S)\right)=|a|^\mu\left[a\right]^\delta.\] From now on, we assume $\mu$ to be purely imaginary, so that $\chi_{\mu,\delta}$ is unitary.

\begin{definition}
The induced representation \[\pi_{\mu,\delta}=\mathop{\rm{Ind}}\nolimits_{P}^{G}{\chi_{\mu,\delta}\otimes 1}\] is called a \emph{degenerate principal series representation} of $G$.
\end{definition}

These representations may be described in several ways. 

\subsubsection{Induced picture}

In this model, $\pi_{\mu,\delta}$ is realised on a space of square-integrable sections of the line bundle $G\times_{\chi_{\mu,\delta}}\C$ over the flag manifold $G/P\simeq\mathrm{P}^{N-1}\C$. A dense subspace of the carrying space in this picture is \[V_{\mu,\delta}^\infty = \left\{f\in C^\infty(\C^N\setminus\left\{0\right\})\,\left|\,\forall a\in\C^\times\,,\,f(a\,\cdot)=|a|^{-\mu-N}[a]^{-\delta}f\right.\right\}.\] By homogeneity, functions in $V_{\mu,\delta}^\infty$ are determined by their restriction to the unit sphere in $\C^N$, which we shall always identify to the $2N-1$-dimensional Euclidean sphere $\S$. The space $V_{\mu,\delta}$ is defined as the completion of $V_{\mu,\delta}^\infty$ with respect to the $L^2$-norm on $\S$, and $G$ acts by left multiplications.

\subsubsection{Compact picture}\label{compict}

Besides considering sections over the flag manifold $G/P$, one may restrict the induced picture to sections over $K/(L\cap K)$ where $L$ denotes the Levi component $\C^\times.\mathrm{Sp}(m,\C)$ of $P$. Considering the compact symplectic group as the orthogonal group of a quaternionic vector space gives the identification \begin{equation}\label{SpS}\S\simeq\mathrm{Sp}(n)/\mathrm{Sp}(m)\end{equation} so that \[K/(L\cap K)\simeq\mathrm{Sp}(n)/\mathrm{U}(1).\mathrm{Sp}(m)\simeq\S/\mathrm{U}(1)\] and $\pi_{\mu,\delta}$ is realised on \[L^2\left(\S\right)_\delta=\left\{f\in L^2\left(\S\right)\,\left|\,\forall \theta\in\R\,,\,f(e^{i\theta}\,\cdot)=e^{-i\delta\theta}f\right.\right\}.\]
The action of $G$ in this picture is slightly more complicated than in the induced one. However, its restriction to $K$ reduces to the regular action by left multiplication: $\pi_{\mu,\delta}^\mathrm{compact}(k)(f)=f(k^{-1}\cdot)$. More details about real, complex and quaternionic spheres and the isotypical decompositions of the associated $L^2$-spaces will appear in Section~\ref{Ktyp}, in order to analyse the $K$-types of the representations $\pi_{\mu,\delta}$ and determine the behaviour of the Knapp-Stein intertwiners on these $K$-types.

\subsubsection{Non-compact picture}\label{noncompict}

Another standard picture for principal series representations is obtained by restricting functions in the induced picture to $N$. More precisely, through the embedding~(\ref{embed}), any $f$ in $V_{\mu,\delta}^\infty$ gives a function on $\HC$ defined by \[(s,X_1,X_2)\longmapsto f(1,2s,X_1,X_2)\]and still denoted by $f$. It follows that $\pi_{\mu,\delta}$ is realised in $L^2\left(\HC\right)$.

\smallskip

In Section~\ref{intertwiners}, we introduce a new model for degenerate principal series representations and discuss its advantages in their study.

\section{Fourier transforms and Knapp-Stein integrals}\label{FourierKnappStein}

Various integral transforms will be used in relation to Knapp-Stein intertwining integrals. We shall define them on the space $\mathcal{S}(\C^N)$ of rapidly decreasing functions and extend them to the Schwartz space $\mathcal{S}'(\C^N)$ of tempered distributions by duality. The  \emph{complex Fourier transform} of $f\in\mathcal{S}(\C^N)$ is defined by \[\mathcal{F}_{\C^N}f(\xi)=\int_{\C^N}f(X)e^{-2i\pi\Re{\scal{X}{\xi}}}\,dX.\]

The factors in the product decomposition of $\C^N$ will be labelled $\C_i^n$ with $i\in\left\{1,2\right\}$ so that $\C^N=\C_1^n\times\C_2^n$ and the \textit{partial Fourier transform} with respect to the $i$-th variable will be denoted by $\mathcal{F}_{\C_i^n}$. Thus for $f\in\mathcal{S}(\C^N)$, \[\mathcal{F}_{\C_2^n}f(X_1,\xi_2)=\int_{\C^n}f(X_1,X_2)e^{-2i\pi\Re{\scal{X_2}{\xi_2}}}\,dX_2.\]

Finally, the \emph{complex symplectic Fourier transform} is defined on $\mathcal{S}(\C^N)$ by \[\mathcal{F}_{\mathrm{symp}}f(\xi)=\int_{\C^N}f(X)e^{-2i\pi\Re{\omega(X,\xi)}}\,dX,\] that is \[\mathcal{F}_{\mathrm{symp}}f(\xi)=\mathcal{F}_{\C^N}f(J\xi).\]

The following lemmas state elementary properties of the above transforms, to be used further.

\begin{lemma}\label{lemmaFC2}
Let $f\in V_{-\mu,-\delta}$ and $a\in\C^\times$. Then $\forall(X_1,\xi_2)\in\C^N$, \[\mathcal{F}_{\C_2^n}f(a X_1,a^{-1}\xi_2)=|a|^\mu[a]^\delta\mathcal{F}_{\C_2^n}f(X_1,\xi_2)\]
\end{lemma}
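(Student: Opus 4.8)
The plan is to reduce the claimed covariance of $\mathcal{F}_{\C_2^n}f$ to the homogeneity property defining $V_{-\mu,-\delta}$, via a change of variables in the integral. First I would write out the definition: by the homogeneity of $f \in V_{-\mu,-\delta}$ we have $f(a X_1, a X_2) = |a|^{\mu - N}[a]^{\delta} f(X_1, X_2)$ for all $a \in \C^\times$ (note the sign flip in $\mu,\delta$ compared to $V_{\mu,\delta}$). Then I would compute
\[
\mathcal{F}_{\C_2^n}f(a X_1, a^{-1}\xi_2) = \int_{\C^n} f(a X_1, X_2)\, e^{-2i\pi\Re\scal{X_2}{a^{-1}\xi_2}}\, dX_2,
\]
and substitute $X_2 = a Y_2$. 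The Jacobian of multiplication by $a \in \C^\times$ on $\C^n \simeq \R^{2n}$ is $|a|^{2n} = |a|^N$, so $dX_2 = |a|^N\, dY_2$. In the exponent, $\Re\scal{aY_2}{a^{-1}\xi_2} = \Re\scal{Y_2}{\xi_2}$ since the pairing $\scal{\cdot}{\cdot}$ is $\C$-bilinear and the factors $a$ and $a^{-1}$ cancel exactly, leaving the phase unchanged.

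Combining these, the integral becomes
\[
|a|^N \int_{\C^n} f(a X_1, a Y_2)\, e^{-2i\pi\Re\scal{Y_2}{\xi_2}}\, dY_2 = |a|^N \cdot |a|^{\mu - N}[a]^{\delta} \int_{\C^n} f(X_1, Y_2)\, e^{-2i\pi\Re\scal{Y_2}{\xi_2}}\, dY_2,
\]
where the second equality uses the homogeneity of $f$ applied with scalar $a$ to \emph{both} arguments. The power of $|a|$ collapses to $|a|^{N + \mu - N} = |a|^\mu$, and we are left with $|a|^\mu [a]^\delta \,\mathcal{F}_{\C_2^n}f(X_1,\xi_2)$, which is exactly the claim. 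For $f$ merely in the $L^2$-completion $V_{-\mu,-\delta}$ rather than in $V_{-\mu,-\delta}^\infty$, one runs the argument on the dense subspace of smooth vectors and passes to the limit, or interprets $\mathcal{F}_{\C_2^n}$ distributionally; the covariance identity is preserved under either passage since both sides depend continuously on $f$.

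There is no genuine obstacle here: the only point requiring a little care is bookkeeping the two scalings against each other --- the partial Fourier variable is scaled by $a^{-1}$ while the space variable is scaled by $a$, so the change of variables must produce precisely the $a$ needed to convert the half-scaled argument $(aX_1, X_2)$ into the fully-scaled argument $(aX_1, aY_2)$ to which homogeneity applies. Tracking the Jacobian $|a|^N$ and confirming the invariance of the phase factor are the two mechanical checks, both immediate from $\C$-bilinearity of $\scal{\cdot}{\cdot}$ and the standard scaling of Lebesgue measure on $\C^n$.
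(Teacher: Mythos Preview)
Your proof is correct and follows essentially the same approach as the paper's own proof: write out the defining integral, substitute $X_2 \mapsto a X_2$ (producing the Jacobian $|a|^{2n}=|a|^N$ and restoring the phase), and then invoke the homogeneity of $f\in V_{-\mu,-\delta}$ to collapse the powers of $|a|$. Your additional remark about passing from $V_{-\mu,-\delta}^\infty$ to the completion is a reasonable clarification that the paper omits.
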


\begin{proof}
By definition, \begin{eqnarray*}\mathcal{F}_{\C_2^n}f(a X_1,a^{-1}\xi_2)&=&\int_{\C_2^n}f(aX_1,X_2)e^{-2i\pi\Re{\scal{X_2}{a^{-1}\xi_2}}}\,dX_2\\
&=&|a|^{2n}\int_{\C_2^n}f(aX_1,aX_2)e^{-2i\pi\Re{\scal{X_2}{\xi_2}}}\,dX_2\\
&=&|a|^{2n}|a|^{\mu-2n}[a]^\delta\mathcal{F}_{\C_2^n}f(X_1,\xi_2),\end{eqnarray*}hence the result.\end{proof}

\begin{lemma}\label{flip}
Let $f\in\mathcal{S}(\C^N)$. Then $\forall (u,v)\in\C_1^n\times\C_2^n\simeq\C^N$, \[\left(\mathcal{F}_{\C_2^n}\circ\mathcal{F}_{\mathrm{symp}}\circ\mathcal{F}_{\C_2^n}^{-1}\right)f(u,v)=f(v,u).\]
\end{lemma}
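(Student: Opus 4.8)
The plan is to prove the stronger operator identity that the composite $T=\mathcal{F}_{\C_2^n}\circ\mathcal{F}_{\mathrm{symp}}\circ\mathcal{F}_{\C_2^n}^{-1}$ is nothing but the coordinate–flip operator $\phi\longmapsto\bigl((u,v)\mapsto\phi(v,u)\bigr)$ on $\mathcal{S}(\C^N)$; since partial Fourier transforms are isomorphisms of $\mathcal{S}(\C^N)$, the left-hand side is well defined on Schwartz functions and it suffices to compare the two sides there. The engine of the argument is a factorisation of $\mathcal{F}_{\mathrm{symp}}$ itself into partial transforms.

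First I would unwind the definition $\omega(X,\xi)=\scal{X_2}{\xi_1}-\scal{X_1}{\xi_2}$ inside the integral defining $\mathcal{F}_{\mathrm{symp}}g$, writing the exponential as $e^{-2i\pi\Re\scal{X_2}{\xi_1}}\cdot e^{+2i\pi\Re\scal{X_1}{\xi_2}}$ and applying Fubini. The second factor is exactly the kernel of $\mathcal{F}_{\C_1^n}^{-1}$ acting on the first slot against the parameter $\xi_2$, and the first factor is the kernel of $\mathcal{F}_{\C_2^n}$ acting on the second slot against $\xi_1$; this produces, for every $g\in\mathcal{S}(\C^N)$,
\[\mathcal{F}_{\mathrm{symp}}g(\xi_1,\xi_2)=\bigl(\mathcal{F}_{\C_1^n}^{-1}\circ\mathcal{F}_{\C_2^n}\,g\bigr)(\xi_2,\xi_1),\]
that is, $\mathcal{F}_{\mathrm{symp}}=\sigma\circ\mathcal{F}_{\C_1^n}^{-1}\circ\mathcal{F}_{\C_2^n}$, where $\sigma$ denotes the flip of the two $\C^n$-factors, acting on functions by $(\sigma\phi)(\xi_1,\xi_2)=\phi(\xi_2,\xi_1)$. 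I would then record two elementary commutation facts: that $\mathcal{F}_{\C_1^n}$ and $\mathcal{F}_{\C_2^n}$ commute (integrations in disjoint variables), and that $\mathcal{F}_{\C_2^n}\circ\sigma=\sigma\circ\mathcal{F}_{\C_1^n}$, a one-line check from the integral definitions since conjugating a partial transform by $\sigma$ merely exchanges the slot it acts on. Combining these with the factorisation yields the telescoping
\[T=\mathcal{F}_{\C_2^n}\circ\sigma\circ\mathcal{F}_{\C_1^n}^{-1}\circ\mathcal{F}_{\C_2^n}\circ\mathcal{F}_{\C_2^n}^{-1}=\mathcal{F}_{\C_2^n}\circ\sigma\circ\mathcal{F}_{\C_1^n}^{-1}=\sigma\circ\mathcal{F}_{\C_1^n}\circ\mathcal{F}_{\C_1^n}^{-1}=\sigma,\]
which is exactly the asserted identity.

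The only point requiring genuine care is bookkeeping: keeping straight which $\C^n$-factor each partial transform acts on, and noticing that it is precisely the sign in $\omega$ — carried by the matrix $J$ — that turns one of the two partial transforms into an \emph{inverse} transform, so that the cancellation $\mathcal{F}_{\C_1^n}\circ\mathcal{F}_{\C_1^n}^{-1}=\mathrm{id}$ in the last display is exact. Everything else reduces to Fubini applied to Schwartz functions together with the Fourier inversion theorem on $\C^n\simeq\R^{2n}$.
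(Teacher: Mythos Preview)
Your argument is correct, but it is organised differently from the paper's proof. The paper proceeds by brute force: it expands the composite $\mathcal{F}_{\C_2^n}\circ\mathcal{F}_{\mathrm{symp}}\circ\mathcal{F}_{\C_2^n}^{-1}$ as a single quadruple integral over $\C_2^n\times\C_1^n\times\C_2^n\times\C_2^n$, groups the exponents so that two of the inner integrations produce Dirac deltas $\delta(v-u'')\,\delta(u-v''')$, and then evaluates. You instead first factorise $\mathcal{F}_{\mathrm{symp}}=\sigma\circ\mathcal{F}_{\C_1^n}^{-1}\circ\mathcal{F}_{\C_2^n}$ and then use the commutation relation $\mathcal{F}_{\C_2^n}\circ\sigma=\sigma\circ\mathcal{F}_{\C_1^n}$ to collapse the composite algebraically to $\sigma$. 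The two proofs have the same content --- the delta-function step in the paper is exactly Fourier inversion, which is what drives your cancellations --- but your version makes explicit that it is the sign in $\omega$ (equivalently, the matrix $J$) that turns one partial transform into an inverse and hence produces the telescoping; it also avoids the slightly informal appeal to $\delta$ on Schwartz functions. The paper's computation, on the other hand, requires no auxiliary operator $\sigma$ and no separate commutation lemma, so it is marginally more self-contained for a reader who only wants the identity verified.
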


\begin{proof}
Let us compute:
\begin{align*}
\left(\mathcal{F}_{\C_2^n}\circ\mathcal{F}_{\mathrm{symp}}\circ\mathcal{F}_{\C_2^n}^{-1}\right)&f(u,v)\\
=\int_{\C_2^n\times\C_1^n\times\C_2^n\times\C_2^n}&f(u'',v''')e^{-2i\pi\Re{\left(\scal{v'}{v}+\scal{v''}{u}-\scal{u''}{v'}-\scal{v'''}{v''}\right)}}\,dv'''\,du''\,dv''\,dv'\\
=\int_{\C_2^n\times\C_1^n\times\C_2^n\times\C_2^n}&f(u'',v''')e^{-2i\pi\Re{\left(\scal{v'}{v-u''}+\scal{v''}{u-v'''}\right)}}\,dv'''\,du''\,dv''\,dv'\\
=\int_{\C_1^n\times\C_2^n}f(u'',v&''')\delta\left(v-u''\right)\delta\left(u-v'''\right)\,dv'''\,du'',
\end{align*}
hence the expected equality.
\end{proof}

Let us introduce some more notations: $\varepsilon$ denote the real matrix of size $2N$ defined by blocks as follows: \[\varepsilon = \left[\begin{array}{c|c}I_{N}&0\\\hline \;0\;&-I_{N}\end{array}\right],\] and if $f$ is a function on $\R^{2N}$, we denote by $f^\varepsilon$ the function $X\longmapsto f(\varepsilon X)$.

\begin{remark}\label{Feps}
Under the identification between $\R_1^N\times\R_2^N$ and $\C^N$ given by $(X_1,X_2)\longmapsto X_1 + iX_2$, the transformation $\varepsilon$ induces the complex conjugation. It follows that the complex Fourier transform can be seen as the transform $\mathcal{F}_\varepsilon$ defined on $\mathcal{S}(\R^{2N})$ by:
\[\mathcal{F}_\varepsilon\,f(\xi) = \int_{\R_1^{N}\times\R_2^N}f(X)e^{-2i\pi\left(\scal{X_1}{\xi_1}-\scal{X_2}{\xi_2}\right)}\,d(X_1,X_2),\]
so that  $\mathcal{F}_\varepsilon\,f = \left(\mathcal{F}_{\R^{2N}}\,f\right)^\varepsilon$. Indeed, if $\xi = (\xi_1,\xi_2)$ and $\zeta=\xi_1 + i\xi_2$, it is clear that \[\mathcal{F}_{\C^N}\,f(\zeta) = \mathcal{F}_\varepsilon\,f(\xi).\]
\end{remark}

\begin{definition}\label{plambda}
If $p$ is a function in $C_c^\infty(\S)$ and $\lambda$ is a complex number, we denote $p_\lambda$ the function defined by extending $p$ to $\R^{2N}\setminus\left\{0\right\}$ by \[p_\lambda(rX)=r^{\lambda}p(X)\] for $r>0$ and $X\in\S$.
\end{definition}

Following \cite{BernsteinRez}, we also consider the meromorphic function in the complex variable $\lambda$ defined by \[B_{2N}(\lambda,k)=\pi^{-\lambda-N}i^{-k}\frac{\Gamma\left(N + \frac{k+\lambda}{2}\right)}{\Gamma\left(\frac{k-\lambda}{2}\right)}.\] Then, denoting $\mathcal{H}^k(\R^{2N})$ the space of harmonic homogeneous polynomials of degree $k$ over $\R^{2N}$, the following holds:

\begin{lemma}\label{Fepslambda}
Let $p$ be the restriction to $\S$ of a polynomial in $\mathcal{H}^k(\R^{2N})$. The following identity between distributions on $\R^{2N}$ depending meromorphically on $\lambda$ holds: \begin{equation}\mathcal{F}_\varepsilon\,p_\lambda = B_{2N}(\lambda,k)p_{-\lambda-2N}^\varepsilon.\label{eqFepslambda}\tag{$\dag$}\end{equation}
\end{lemma}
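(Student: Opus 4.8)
The plan is to reduce the statement to the classical Bochner–Hecke formula for the Euclidean Fourier transform, which computes $\mathcal{F}_{\R^{2N}}$ on functions of the form $P(X)\,\varphi(|X|)$ with $P\in\mathcal{H}^k(\R^{2N})$. The subtlety is only bookkeeping: the transform appearing here is $\mathcal{F}_\varepsilon = (\mathcal{F}_{\R^{2N}})^\varepsilon$ by Remark~\ref{Feps}, and the object $p_\lambda$ is a homogeneous distribution rather than a Schwartz function, so I would work meromorphically in $\lambda$.

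First I would note that, since $p$ is the restriction to $\S$ of a harmonic polynomial $P$ of degree $k$, the homogeneous extension in Definition~\ref{plambda} is exactly $p_\lambda(X) = |X|^{\lambda-k}P(X)$ for $X\neq0$. For $\Re\lambda$ in a suitable half-plane this is a locally integrable, tempered, homogeneous distribution of degree $\lambda$, and it depends meromorphically on $\lambda$ as a distribution, so it suffices to establish (\ref{eqFepslambda}) on that open strip and then invoke analytic continuation. Second, I would recall the Bochner identity: for a radial profile the Fourier transform of $P(X)|X|^{\lambda-k}$ is again $P(\xi)$ times a radial profile in $\xi$, because $\mathcal{F}_{\R^{2N}}$ commutes with the action of $\mathrm{O}(2N)$ and maps the $\mathcal{H}^k$-isotypic component to itself. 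The homogeneity degree forces the answer to be a constant multiple of $P(\xi)|\xi|^{-\lambda-2N-k}=p_{-\lambda-2N}(\xi)$, and the constant is a ratio of Gamma functions obtained from the one-dimensional Hankel-type integral $\int_0^\infty t^{s-1} J_\nu(t)\,dt$ (equivalently, from the Fourier transform of $|x|^{s}$ on $\R^{2N}$ twisted by a spherical harmonic). A direct computation, or a citation to the standard reference (e.g.\ Stein–Weiss, or the computation in \cite{BernsteinRez} itself), identifies this constant with $\pi^{-\lambda-N}i^{-k}\Gamma\!\left(N+\tfrac{k+\lambda}{2}\right)/\Gamma\!\left(\tfrac{k-\lambda}{2}\right) = B_{2N}(\lambda,k)$; the factor $i^{-k}$ is exactly the eigenvalue of $\mathcal{F}_{\R^{2N}}$ on $\mathcal{H}^k$ and $\pi^{-\lambda-N}$ comes from the $2\pi$ normalisation in the exponential.

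Third, I would pass from $\mathcal{F}_{\R^{2N}}$ to $\mathcal{F}_\varepsilon$. By Remark~\ref{Feps} we have $\mathcal{F}_\varepsilon f = (\mathcal{F}_{\R^{2N}}f)^\varepsilon$, and since $\varepsilon\in\mathrm{O}(2N)$ fixes the radial variable, $(p_{-\lambda-2N})^\varepsilon = (p^\varepsilon)_{-\lambda-2N}$; applying $\varepsilon$ to the Bochner identity therefore produces precisely the right-hand side $B_{2N}(\lambda,k)\,p_{-\lambda-2N}^\varepsilon$, with no change to the constant because $\varepsilon$ preserves $|X|$ and hence acts trivially on the scalar. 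Finally, I would remark that both sides of (\ref{eqFepslambda}) are meromorphic families of tempered distributions — the poles on the left coming from those of the homogeneous distribution $p_\lambda$ and of the Gamma factors — so the identity, once proved on a half-plane, holds identically as meromorphic distributions.

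The main obstacle is the explicit evaluation of the normalising constant: getting the Gamma-function ratio and the powers of $\pi$ and $i$ to come out exactly as $B_{2N}(\lambda,k)$ requires care with the $2\pi$-convention in the exponential and with the classical formula for the Fourier transform of a spherical harmonic times a power of $|X|$. This is a known computation (it is the one used in \cite{BernsteinRez}), so I would either reproduce it compactly via the Hankel transform of $t\mapsto t^{\,c}J_{N+k-1}(2\pi t)$ or simply cite it; everything else — harmonicity forcing the $\mathcal{H}^k$-isotypic behaviour, homogeneity fixing the exponent, and the $\varepsilon$-twist — is essentially formal.
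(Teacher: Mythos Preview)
Your proposal is correct and follows essentially the same route as the paper: both rely on the Bochner identity to reduce to a one-dimensional Hankel-type integral yielding the Gamma ratio $B_{2N}(\lambda,k)$, and then extend meromorphically in $\lambda$. The only cosmetic difference is that the paper carries out the computation by duality---pairing $p_\lambda$ against test functions of the form $g(|X|)q(X)$ with $q\in\mathcal{H}^l(\R^{2N})$ and applying Bochner on the test-function side---whereas you apply Bochner--Hecke directly to $p_\lambda=|X|^{\lambda-k}P(X)$ and only afterwards compose with $\varepsilon$; the underlying integral and its evaluation (the paper cites \cite[Claim~2.9]{BernsteinRez}) are identical.
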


\begin{proof}
Following the lines of the proof of \cite[Lemma 2.7]{BernsteinRez}, it is enough to prove that \begin{equation}\scal{\mathcal{F}_\varepsilon\,p_\lambda}{gq}=B_{2N}(\lambda,l)\scal{p_{-\lambda-2N}^\varepsilon}{gq}\label{eq}\end{equation} for $g\in C_c^\infty(\R_+)$ and $q\in\mathcal{H}^l(\R^{2N})$ in the domain \[-2N<\Re \lambda<-\left(N+\frac{1}{2}\right)\] to ensure that (\ref{eqFepslambda}) holds on $\R^{2N}$. Local integrability of $p_\lambda$ and $p_{-\lambda-2N}^\varepsilon$ follows from the choice of the domain. Denoting by $J_\mu(\nu)$ the Bessel function of the first kind, the Bochner identity directly implies that \[\int_{\S}q(\omega)e^{-i\scal{\omega}{\varepsilon\eta}}\,d\sigma(\omega) = (2\pi)^Ni^{-l}\nu^{1-N}J_{l+N-1}(\nu)q(\varepsilon\eta)\]
so that \begin{eqnarray*}\mathcal{F}_\varepsilon\,gq(r\omega)&=&\int_0^{+\infty}\int_{\S} g(s)q(\omega')e^{-2i\pi rs\scal{\omega'}{\varepsilon\omega}}s^{2N-1}\,ds\,d\sigma(\omega')\\&=&2\pi i^{-l}r^{1-N}q(\varepsilon\omega)\int_0^{+\infty}s^N g(s) J_{l+N-1}(2\pi rs)\,ds.\end{eqnarray*}
It follows that \begin{eqnarray*}\scal{p_\lambda}{\mathcal{F}_\varepsilon(gq)}&=&\int_0^{+\infty}\int_{\S} r^\lambda p(\omega)\mathcal{F}_\varepsilon\,gq(r\omega)r^{2N-1}\,dr\,d\sigma(\omega)\\&=&\int_0^{+\infty}\int_{\S}\left(\int_0^{+\infty}I(r,s)\,ds\right)p(\omega)q(\varepsilon\omega)\,d\sigma(\omega)\,dr\end{eqnarray*}
where we set \[I(r,s) = 2\pi i^{-l}r^{\lambda+N}s^N g(s) J_{l+N-1}(2\pi rs).\]
The proof of Claim 2.9 in \cite{BernsteinRez} ensures that $I$ belongs to $L^1\left(\R_+\times\R_+,dr\,ds\right)$ and that \[\int_0^{+\infty}I(r,s)\,dr = B_{2N}(\lambda,l)g(s)s^{-\lambda-1}.\]
As a consequence, \begin{eqnarray*}\scal{p_\lambda}{\mathcal{F}_\varepsilon(gq)}&=&\int_{\S} p(\omega)q(\varepsilon\omega)\,d\sigma(\omega)\int_0^{+\infty}\int_0^{+\infty}I(r,s)\,dr\,ds\\&=&B_{2N}(\lambda,l)\int_{\S} p(\omega)q(\varepsilon\omega)\,d\sigma(\omega)\int_0^{+\infty}g(s)s^{-\lambda-1}\,ds,\end{eqnarray*}
which implies that \[\scal{p_\lambda}{\mathcal{F}_\varepsilon(gq)} = B_{2N}(\lambda,l)\scal{p_{-\lambda-2N}}{gq^\varepsilon},\] thus proving (\ref{eq}) and the lemma.
\end{proof}

\begin{remark}\label{Fsympintertwines} The link between Fourier transforms and Knapp-Stein operators relies on the observation that $\mathcal{F}_{\mathrm{symp}}$ provides a unitary equivalence between $\pi_{\mu,\delta}$ and $\pi_{-\mu,-\delta}$. Indeed, denoting $f_a=f(a\,\cdot)$ for $f\in\mathcal{S}(\C^N)$ and $a\in\C^\times$, a single change of variables leads to \[\left(\mathcal{F}_{\mathrm{symp}}f\right)_a = |a|^{-2N}\mathcal{F}_{\mathrm{symp}}\left(f_{a^{-1}}\right).\]
If moreover $f\in V_{-\mu,-\delta}$ then, by linearity, \[\left(\mathcal{F}_{\mathrm{symp}}f\right)_a = |a|^{-2N}|a|^{-\mu+N}[a]^{-\delta}\mathcal{F}_{\mathrm{symp}}f,\] that is \[\mathcal{F}_{\mathrm{symp}}:V_{-\mu,-\delta}\longrightarrow V_{\mu,\delta}.\] Finally, for any $g\in G$, \[\mathcal{F}_{\mathrm{symp}}\left(\pi_{-\mu,-\delta}(g)f\right)(\xi) = \int_{\C^N}f(X)e^{-2i\pi\Re{ \omega(gX,\xi)}}\,dX\] and since $\omega$ is preserved by $G$, it follows that $\mathcal{F}_{\mathrm{symp}}\pi_{-\mu,-\delta}(g)=\pi_{\mu,\delta}(g)\mathcal{F}_{\mathrm{symp}}$.
\end{remark}

Next we introduce the Knapp-Stein operators which will be related to $\mathcal{F}_{\mathrm{symp}}$ after normalisation.

\begin{definition}
The operator $\mathcal{T}_{\mu,\delta}:V_{-\mu,-\delta}\longrightarrow V_{\mu,\delta}$ obtained by meromorphic continuation with respect to $\mu$ of the integral \[\mathcal{T}_{\mu,\delta}f(Y)=\int_{\S}f(X)\left|\Re{\omega(X,Y)}\right|^{-\mu-N}\left[\Re{\omega(X,Y)}\right]^{-\delta}\,d\sigma(X),\] where $d\sigma$ is the Euclidean measure on the unit sphere of $\R^{2N}$, is called the \emph{Knapp-Stein operator}, as introduced in \cite{KS1}, associated to the parameter $(\mu,\delta)\in\C\times\Z$.
\end{definition}

\begin{remark}
The kernel defining the operator in the above definition depends only on $\mu$ and the class of $\delta$ in $\Z/2\Z$.
\end{remark}

The operators $\mathcal{T}_{\mu,\delta}$ enjoy the intertwining property, yet as always in Knapp-Stein theory \cite{KS1,KS2}, they are not unitary at first. However, normalisation may be obtained by using the symplectic Fourier transform. More precisely, let \[C_N(\mu,\delta)=\left\{\begin{array}{cc}2\pi^{\mu+N-\frac{1}{2}}\dfrac{\Gamma\left(\frac{1-\mu-N}{2}\right)}{\Gamma\left(\frac{\mu+N}{2}\right)}&\text{if $\delta$ is even}\\-2i\pi^{\mu+N-\frac{1}{2}}\dfrac{\Gamma\left(\frac{2-\mu-N}{2}\right)}{\Gamma\left(\frac{1+\mu+N}{2}\right)}&\text{if $\delta$ is odd.}\end{array}\right.\] Then the followings holds:

\begin{proposition}\label{propFsympT}
The \emph{normalised Knapp-Stein operator} associated to $(\mu,\delta)$ is defined by \[\widetilde{\mathcal{T}}_{\mu,\delta} = \frac{1}{C_N(\mu,\delta)}T_{\mu,\delta}.\]
As a meromorphic extension in the complex variable $\mu$, it satisfies \[\widetilde{\mathcal{T}}_{\mu,\delta} = \mathcal{F}_{\mathrm{symp}}\left|_{V_{-\mu,-\delta}}.\right.\label{FsympT}\tag{$\ddag$}\]
As a consequence, $\widetilde{\mathcal{T}}_{\mu,\delta}$ yields a unitary equivalence between $\pi_{-\mu,-\delta}$ and $\pi_{\mu,\delta}$.
\end{proposition}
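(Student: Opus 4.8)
The plan is to reduce the identity $(\ddag)$ to the meromorphic identity $(\dag)$ of Lemma~\ref{Fepslambda}, by carefully tracking how the symplectic Fourier transform $\mathcal{F}_{\mathrm{symp}}$ acts on the restriction to the sphere of a homogeneous function. First I would take $f\in V_{-\mu,-\delta}$ and write, for $Y$ on the unit sphere $\S$ and $r>0$, the kernel $\bigl|\Re\omega(X,rY)\bigr|^{-\mu-N}\bigl[\Re\omega(X,rY)\bigr]^{-\delta}$ in terms of $X$ ranging over $\S$; by homogeneity the integral defining $\mathcal{T}_{\mu,\delta}f(Y)$ is, up to the factor $r^{-\mu-N}$, essentially a convolution on the sphere against the one-dimensional distribution $t\mapsto |t|^{-\mu-N}[t]^{-\delta}$ evaluated at $t=\Re\omega(X,Y)$. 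The normalising constant $C_N(\mu,\delta)$ is precisely the Fourier transform of that one-dimensional homogeneous distribution: the classical formulas $\widehat{|t|^{s}} = c(s)\,|t|^{-s-1}$ and $\widehat{\mathrm{sgn}(t)|t|^{s}} = c'(s)\,\mathrm{sgn}(t)|t|^{-s-1}$, with $c,c'$ products of Gamma factors, reproduce exactly the two cases in the definition of $C_N$. So the first step is to identify $\mathcal{T}_{\mu,\delta}$ as an integral operator whose kernel, after a partial Fourier transform, becomes $e^{-2i\pi\Re\omega(X,Y)}$ times a power of a radial variable.

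Second, I would make this precise using the $p_\lambda$-calculus set up before Lemma~\ref{Fepslambda} together with Remark~\ref{Feps}, which lets one view the complex Fourier transform $\mathcal{F}_{\C^N}$ as $\mathcal{F}_\varepsilon$ on $\R^{2N}$, and the symplectic one as $\mathcal{F}_{\mathrm{symp}}f = \mathcal{F}_{\C^N}f(J\,\cdot)$. The strategy is to decompose an arbitrary element of $V_{-\mu,-\delta}$ into $K$-isotypic pieces — equivalently, to test $(\ddag)$ against restrictions to $\S$ of harmonic polynomials $p\in\mathcal{H}^k(\R^{2N})$ extended homogeneously of degree $\lambda=\mu-N$ (the homogeneity forced by membership in $V_{-\mu,-\delta}$, modulo the twist by $[a]^\delta$, which corresponds to a fixed parity of $k$). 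Applying Lemma~\ref{Fepslambda} gives $\mathcal{F}_\varepsilon p_\lambda = B_{2N}(\lambda,k)\,p_{-\lambda-2N}^\varepsilon$, and then Remark~\ref{Fsympintertwines} guarantees that $\mathcal{F}_{\mathrm{symp}}$ indeed maps $V_{-\mu,-\delta}$ into $V_{\mu,\delta}$ so both sides land in the same space. On the other side, the Knapp-Stein kernel expanded in spherical harmonics has its action on $\mathcal{H}^k$ given by the Funk-Hecke formula, producing a ratio of Gamma functions in $\mu$ and $k$; matching this, after division by $C_N(\mu,\delta)$, against $B_{2N}(\mu-N,k)$ is the computational heart of the proof. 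Both sides are meromorphic in $\mu$ and agree on an open set (the convergence domain of the defining integrals, analogous to the strip $-2N<\Re\lambda<-(N+\tfrac12)$ used in Lemma~\ref{Fepslambda}), so they agree everywhere by analytic continuation.

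Concretely, the key steps in order are: (i) rewrite $\mathcal{T}_{\mu,\delta}$ so that its kernel is the pullback under $(X,Y)\mapsto\Re\omega(X,Y)$ of a one-dimensional homogeneous distribution, and recognise that a partial Fourier transform in the fibre direction converts that distribution — with Fourier multiplier exactly $C_N(\mu,\delta)$ — into the exponential kernel of $\mathcal{F}_{\mathrm{symp}}$; (ii) alternatively and more cleanly, verify $(\ddag)$ on each $K$-type by applying Lemma~\ref{Fepslambda} to $p_\lambda$ with $\lambda=\mu-N$ and $p\in\mathcal{H}^k(\R^{2N})$, using Remark~\ref{Feps} to pass between $\mathcal{F}_{\C^N}$ and $\mathcal{F}_\varepsilon$ and the relation $\mathcal{F}_{\mathrm{symp}}=\mathcal{F}_{\C^N}(J\,\cdot)$ to absorb the symplectic twist (note $J\in K$, so it only permutes the harmonic components and does not disturb degrees); (iii) compute the eigenvalue of $\mathcal{T}_{\mu,\delta}$ on $\mathcal{H}^k$ via Funk-Hecke and check the identity of Gamma-factor expressions $C_N(\mu,\delta)\,B_{2N}(\mu-N,k)^{-1}$ against the eigenvalue — this is the Bessel/Gamma bookkeeping, done exactly as in Claim~2.9 of \cite{BernsteinRez}; (iv) invoke meromorphy in $\mu$ and density of $\bigoplus_k\mathcal{H}^k|_\S$ in $L^2(\S)_\delta$ to conclude $(\ddag)$ as an identity of operators, whence by Remark~\ref{Fsympintertwines} the unitarity and the intertwining property $\widetilde{\mathcal{T}}_{\mu,\delta}\,\pi_{-\mu,-\delta}(g)=\pi_{\mu,\delta}(g)\,\widetilde{\mathcal{T}}_{\mu,\delta}$.

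The main obstacle I anticipate is step (iii): getting the Gamma-function constants to match on the nose, including the correct handling of the parity of $\delta$ (even versus odd, which selects between the two branches of $C_N$ and shifts the half-integer arguments of the Gammas) and the factor $i^{-k}$ coming from the Bessel asymptotics in Lemma~\ref{Fepslambda}. A subtler point is justifying the fibrewise Fourier transform in (i) as an honest operation on distributions — the kernel $|\Re\omega(X,Y)|^{-\mu-N}[\Re\omega(X,Y)]^{-\delta}$ is only locally integrable in a strip of $\mu$, so one must first restrict to that strip, perform the computation there, and only then continue meromorphically; this is exactly the role played by the domain $-2N<\Re\lambda<-(N+\tfrac12)$ in the proof of Lemma~\ref{Fepslambda}, and the same device applies here. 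Once the strip argument and the constant-matching are in place, the intertwining and unitarity statements are immediate consequences of Remark~\ref{Fsympintertwines}, since $\mathcal{F}_{\mathrm{symp}}$ is a unitary operator on $L^2(\C^N)$ (Plancherel) restricting compatibly to the relevant subspaces.
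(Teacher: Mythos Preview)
Your route (i) is essentially what the paper does, and your anticipated obstacle (iii) is precisely the computation it carries out. Concretely, the paper writes a general $f\in V^\infty_{-\mu,-\delta}$ in polar coordinates as $h_{\mu-N}(rX)=r^{\mu-N}h(X)$ with $h\in C^\infty(\S)\cap L^2(\S)_{-\delta}$, regularises by $e^{-2\pi\varepsilon r}$, and integrates the radial variable first: this is the one-dimensional Fourier transform of $r_+^{\mu+N-1}$, which by the Gelfand--Shilov formulas equals a Gamma factor times $(s\Re\omega(X,Y)-i0)^{-\mu-N}$. Expanding $(x-i0)^{-\mu-N}$ into its even and odd parts via Euler's reflection formula and using that $h$ is even or odd on $\S$ according to the parity of $\delta$, one keeps exactly the kernel $|\Re\omega(X,Y)|^{-\mu-N}[\Re\omega(X,Y)]^{-\delta}$; the duplication formula for $\Gamma$ then packages the constant as $C_N(\mu,\delta)^{-1}$. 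No decomposition into spherical harmonics, no Funk--Hecke, and no appeal to Lemma~\ref{Fepslambda} appears in this argument. Your route (ii), by contrast, would verify $(\ddag)$ $K$-type by $K$-type via Lemma~\ref{Fepslambda} and Funk--Hecke; this is valid but is in effect the computation the paper postpones to Proposition~\ref{spec} (where the $J\varepsilon$-twist and the $B_{2N}$ constant are actually needed), so pursuing it here would conflate two results and force you to match Gamma factors for every degree $k$ rather than once. The paper's choice keeps Proposition~\ref{propFsympT} independent of the harmonic analysis and gets the constant in one stroke.
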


\begin{proof}Following \cite[Prop. 2.13]{BernsteinRez}, we identify $\C^N\setminus\left\{0\right\}$ to $\R_+^*\times\S$ by using spherical coordinates $(r,X)$. Then any function in $V^\infty_{-\mu,-\delta}$ is of the form \[h_{\mu-N}(rX)=r^{-\mu-N}h(X)\] for some $h\in C^\infty\left(\S\right)$ satisfying $h(e^{i\theta}X)=e^{i\delta\theta}h(X)$ for any $X\in\S$ and $\theta\in\R$, that is $h$ is a $C^\infty$ function in $L^2(\S)_{-\delta}$. The spherical component $h$ being fixed, it is enough to prove \[\mathcal{T}_{\mu,\delta}h_{\mu-N} = C_N(\mu,\delta)\mathcal{F}_{\mathrm{symp}}\left|_{V_{-\mu,-\delta}}\right.h_{\mu-N}\] for the parameter $\mu$ in a non-empty open domain. We shall work on the set defined by $\Re{\mu}>-N$. On this half-plane, $h_{\mu-N}$ is locally integrable and, defining $h_{\varepsilon,\mu-N}$ by \[h_{\varepsilon,\mu-N}(rX)=e^{-2\pi r}h_{\mu-N}(rX),\] one has $\lim\limits_{\varepsilon\to0^+}h_{\varepsilon,\mu-N} = h_{\mu-N}$ in $\mathcal{S}(\C^N)$, so that \[\mathcal{F}_{\mathrm{symp}}h_{\mu-N} = \lim_{\varepsilon\to0^+}\mathcal{F}_{\mathrm{symp}}h_{\varepsilon,\mu-N}.\]

\begin{align*}
\mathcal{F}_{\mathrm{symp}}h_{\varepsilon,\mu-N}(&sY)\\
& = \int_0^{+\infty}\int_{\S}e^{-2\pi r}r^{\mu-N}e^{-2i\pi rs\Re{\omega(X,Y)}}h(X)\,d\sigma(X)r^{2N-1}dr\\
& = \int_{\S}\mathcal{F}_\R\left(r_+^{\mu+N-1}\right)\left(s\Re{\omega(X,Y)}-i\varepsilon\right)h(X)\,d\sigma(X)
\end{align*}
where $\mathcal{F}_\R$ denotes the usual Fourier transform on the real line. Then, using classical formulas relative to Fourier transforms of homogeneous distributions (see~\cite{GelfandShilov}), one has \[\mathcal{F}_{\mathrm{symp}}h_{\varepsilon,\mu-N}(sY) = \frac{\Gamma(\mu+N)e^{-i\frac{\pi}{2}(\mu+N)}}{(2\pi)^{\mu+N}}\int_{\S}\left(s\Re{\omega(X,Y)}-i\varepsilon\right)^{-\mu-N}h(X)\,d\sigma(X)\]
and letting $\varepsilon\to0^+$ yields
\[\mathcal{F}_{\mathrm{symp}}h_{\mu-N}(sY) = \frac{\Gamma(\mu+N)e^{-i\frac{\pi}{2}(\mu+N)}}{(2\pi s)^{\mu+N}}\int_{\S}\left(\Re{\omega(X,Y)}-i0\right)^{-\mu-N}h(X)\,d\sigma(X).\]
 
Recall that
\begin{align*}
(x-i0)&^{-\mu-N}\\
&= e^{i\frac{\pi}{2}(\mu+N)}\left(\cos\left(\frac{\pi(\mu+N)}{2}\right)|x|^{-\mu-N} - i\sin\left(\frac{\pi(\mu+N)}{2}\right)|x|^{-\mu-N}\mathrm{sign}(x)\right)\\
&= \pi e^{i\frac{\pi}{2}(\mu+N)}\left(\frac{|x|^{-\mu-N}}{\Gamma\left(\frac{1+\mu+N}{2}\right)\Gamma\left(\frac{1-\mu-N}{2}\right)} - i\frac{|x|^{-\mu-N}\mathrm{sign}(x)}{\Gamma\left(\frac{\mu+N}{2}\right)\Gamma\left(\frac{2-\mu-N}{2}\right)}\right),
\end{align*}
using Euler's formula. The duplication formula satisfied by $\Gamma$ implies that: \[\Gamma\left(\mu+N\right)=2^{\mu+N-1}\pi^{-\frac{1}{2}}\Gamma\left(\frac{\mu+N}{2}\right)\Gamma\left(\frac{1+\mu+N}{2}\right).\]

Finally, we notice that $h$ is even if $h_{\mu-N}$ belongs to $V_{-\mu,-\delta}$ with $\delta\in2\Z$ and odd otherwise, so that $\mathcal{F}_{\mathrm{symp}}h_{\mu-N}(sY)$ is equal to \[\frac{\pi^{-\mu-N+\frac{1}{2}}}{2}\frac{\Gamma\left(\frac{\mu+N}{2}\right)}{\Gamma\left(\frac{1-\mu-N}{2}\right)}s^{-\mu-N}\int_{\S}\left|\Re{\omega(X,Y)}\right|^{-\mu-N}h(X)\,d\sigma(X)\]

in the even case, while in the odd case it amounts to

\[\frac{\pi^{-\mu-N+\frac{1}{2}}}{-2i}\frac{\Gamma\left(\frac{1+\mu+N}{2}\right)}{\Gamma\left(\frac{2-\mu-N}{2}\right)}s^{-\mu-N}\int_{\S}\left|\Re{\omega(X,Y)}\right|^{-\mu-N}\mathrm{sign}(\Re{\omega(X,Y)})h(X)\,d\sigma(X),\] thus proving~(\ref{FsympT}). The last statement then follows from Remark~\ref{Fsympintertwines}.
\end{proof}

For future reference, the main Fourier transforms of use in what follows are listed below.

\fbox{\begin{minipage}{125mm}
\begin{center}\textsc{Summary of integral transforms}\end{center}

\medskip

\emph{Complex Fourier transform}:

\[\mathcal{F}_{\C^N}f(\xi)=\int_{\C^N}f(X)e^{-2i\pi\Re{\scal{X}{\xi}}}\,dX\]

\medskip

\emph{Complex symplectic Fourier transform}: \quad $\mathcal{F}_{\mathrm{symp}}f(\xi)=\mathcal{F}_{\C^N}f(J\xi)$

\[\mathcal{F}_{\mathrm{symp}}f(\xi)=\int_{\C^N}f(X)e^{-2i\pi\Re{\omega(X,\xi)}}\,dX\]

\medskip

\emph{Partial Fourier transform}: on $\C^N\simeq\C_1^n\times\C_2^n$

\[\mathcal{F}_{\C_2^n}f(X_1,\xi_2)=\int_{\C^n}f(X_1,X_2)e^{-2i\pi\Re{\scal{X_2}{\xi_2}}}\,dX_2\]

\medskip

\emph{Real conjugate Fourier transform}: $\mathcal{F}_\varepsilon\,f(\xi) = \mathcal{F}_{\R^{2N}}f(\varepsilon\xi)$

\[\mathcal{F}_\varepsilon\,f(\xi_1,\xi_2) = \mathcal{F}_{\C^N}\,f(\xi_1+i\xi_2)\]

\[\mathcal{F}_\varepsilon\,f(\xi) = \int_{\R_1^{N}\times\R_2^N}f(X)e^{-2i\pi\left(\scal{X_1}{\xi_1}-\scal{X_2}{\xi_2}\right)}\,d(X_1,X_2)\]
\end{minipage}}

\section{\texorpdfstring{Restriction to $\mathrm{Sp}(n)$}{Restriction to Sp(n)}}\label{Ktyp}

The determination of a $K$-type formula for $\pi_{\mu,\delta}$ relies on some known facts regarding the representation theory of orthogonal groups over $\R$, $\C$ and $\quat$. More precisely it will involve the isotypical decompositions of square-integrable functions over the Euclidean unit sphere in $\R^{4n}\simeq\C^{2n}\simeq\quat^n$.

\subsection{Real, complex and quaternionic spherical harmonics}\label{RCHsph}

Let us fix the following classical identifications: \begin{equation}\label{RCH}\quat^n\simeq\left(\C^n+j\C^n\right)\simeq\left(\left(\R^n+i\R^n\right)+j\left(\R^n+i\R^n\right)\right)\end{equation} where $i$, $j$ and $k=ij$ denote the standard quaternion units. Then the unit spheres $S_1(\cdot)$ of those isometric vector spaces all identify to $\S$ and carry compatible left actions of the corresponding orthogonal groups as follows:

\begin{equation}\label{groupesortho}\begin{array}{ccccl}
\mathrm{Sp}(n)&\curvearrowright&S_1(\quat^n)&\curvearrowleft&\mathrm{Sp}(1)\simeq\mathrm{SU}(2)\\
\cap&&\begin{turn}{90}$\simeq$\end{turn}&&\:\cup\\
\mathrm{U}(2n)&\curvearrowright&S_1(\C^{2n})&\curvearrowleft&\mathrm{U}(1)\\
\cap&&\begin{sideways}$\simeq$\end{sideways}&&\:\cup\\
\mathrm{O}(4n)&\curvearrowright&S_1(\R^{4n})&\curvearrowleft&\left\{\pm1\right\}\\
&&\begin{sideways}$\simeq$\end{sideways}&&\\
&&\S&&\\
\end{array}\end{equation}

The right column displays the right actions of scalars of norm $1$. As an identification between $\mathrm{Sp}(1)$ and $\mathrm{SU}(2)$, we fix the one given by \begin{equation}\label{Sp1SU2}i\mapsto\left[\begin{array}{cc}i&0\\0&-i\end{array}\right]\quad,\quad j\mapsto\left[\begin{array}{cc}0&1\\-1&0\end{array}\right]\quad,\quad k\mapsto\left[\begin{array}{cc}0&i\\i&0\end{array}\right],\end{equation} so that $\mathrm{U}(1)$ naturally appears as a Cartan subgroup of $\mathrm{Sp}(1)$ \textit{via} the map \begin{equation}\label{CartanU1}e^{i\theta}\mapsto\left[\begin{array}{cc}e^{i\theta}&0\\0&e^{-i\theta}\end{array}\right].\end{equation}
Square integrable functions on $\S$ decompose with respect to the characters of $\left\{\pm1\right\}$ as even and odd, while the component corresponding to $\delta\in\Z\simeq\widehat{\mathrm{U}(1)}$ is the space $L^2\left(\S\right)_\delta$ introduced in Section~\ref{compict}. No ambiguity arises from the fact that the action was defined on the left there, since $\mathrm{U}(1)$ is abelian.

Let us now turn to the decomposition of $L^2\left(\S\right)$ into irreducible representations of $\mathrm{O}(4n)$ and $\mathrm{SU}(2n)$, that is the classical theory of spherical harmonics on real and complex vector spaces. More details may be found in \cite[Section 2.1]{BernsteinRez}.

As in Section~\ref{FourierKnappStein} we denote by $\mathcal{H}^k(\R^{2N})$ the vector space of harmonic homogeneous polynomials on $\R^{2N}$ of degree $k\in\N$. It is also useful to consider the space $\Hab$ of harmonic polynomials of the complex variable and its conjugate, homogeneous of degree $\alpha$ in $Z\in\C^N$ and of degree $\beta$ in $\bar{Z}$. Under the identifications of~(\ref{RCH}), there is a natural isomorphism

\[\mathcal{H}^k(\R^{2N})\simeq\bigoplus_{\alpha+\beta=k}\Hab.\]

Restricting functions to the sphere provides a complete orthogonal basis, hence a discrete sum decomposition of $L^2\left(\S\right)$ into irreducible components of the left actions of $\mathrm{O}(4n)$ and $\mathrm{U}(2n)$ in~(\ref{groupesortho}), namely

\begin{equation}\label{isotypRC}L^2\left(\S\right)\simeq\sideset{}{^\oplus}\sum_{k\geq0}\left.\mathcal{H}^k(\R^{2N})\right|_{\S}\simeq\sideset{}{^\oplus}\sum_{k\geq0}\bigoplus_{\alpha+\beta=k}\left.\Hab\right|_{\S}.\end{equation}

Taking into account the right actions in~(\ref{groupesortho}), one can refine~(\ref{isotypRC}) as \begin{align}L^2\left(\S\right)_{\mathrm{even}}\simeq\sideset{}{^\oplus}\sum_{k\in 2\N}\left.\mathcal{H}^k(\R^{2N})\right|_{\S}\\
L^2\left(\S\right)_{\mathrm{odd}}\simeq\sideset{}{^\oplus}\sum_{k\in 2\N+1}\left.\mathcal{H}^k(\R^{2N})\right|_{\S}\end{align}
in the real case and
\begin{equation}L^2\left(\S\right)_\delta\simeq\sideset{}{^\oplus}\sum_{\beta-\alpha=\delta}\left.\Hab\right|_{\S}\end{equation} in the complex case.

\begin{remark}The above discussion of the isotypical decomposition of $L^2\left(\S\right)$ with respect to the action of $\mathrm{U}(2n)\times\mathrm{U}(1)$ (resp. $\mathrm{O}(4n)\times\Z_2$) only involves the irreducible representations of $\mathrm{U}(2n)$ (resp. $\mathrm{O}(4n)$) because the commutativity of $\mathrm{U}(1)$ (resp. $\Z_2$) implies that these groups have 1-dimensional irreducible unitary representations, which is not the case of $\mathrm{Sp}(1)$.\end{remark}

In order to proceed with the same analysis over quaternions and write down the analogue of (\ref{isotypRC}) corresponding to the action of  $\mathrm{Sp}(n)\times\mathrm{Sp}(1)$ on $L^2\left(\S\right)$, some additional notations are needed. Following~\cite[Sections 5 and 6]{HoweTan}, we denote by $\Vll$ the unique unitary irreducible representation of $\mathrm{Sp}(n)$ corresponding to the highest weight $(l,l',0,\ldots,0)$ where $l$ and $l'$ are integers satisfying $l\geq l'\geq0$. Similarly, $V_1^j$ denotes the irreducible $j+1$-dimensional representation of $\mathrm{Sp}(1)\simeq\mathrm{SU}(2)$. 

Since $\mathrm{U}(1)$ naturally embeds into $\mathrm{SU}(2)$ \textit{via}~(\ref{CartanU1}), this representation decomposes according to the characters of the circle. More precisely, if $\C_\delta$ denotes the space of the character $z\mapsto z^\delta$ of $\mathrm{U}(1)$, for $\delta\in\Z$, then
\begin{equation}\label{VllCdelta}V_1^j\simeq\bigoplus_{\substack{|\delta|\leq j\\\delta\equiv j[2]}}\C_\delta.\end{equation}
Now identifying $\S$ to $S_1\left(\quat^n\right)$, the isotypic decomposition of $L^2(\S)$ with respect to action of $\mathrm{Sp}(n)\times\mathrm{Sp}(1)$ defined by the first line of (\ref{groupesortho}) is given in~\cite{HoweTan} by: \[L^2(\S)\simeq\sideset{}{^\oplus}\sum_{l\geq l'\geq0} \Vll\otimes V_1^{l-l'}.\] Together with~(\ref{VllCdelta}), this decomposition gives \begin{equation}\label{L2SVll}L^2(\S)\simeq\sideset{}{^\oplus}\sum_{\left(\delta,(l,l')\right)\in\Z\times\N^2\,,\,\left\{\substack{l-l'\geq |\delta|\\l-l'\equiv\delta[2]}\right.}\Vll\otimes\C_\delta\end{equation} under the action of $\mathrm{Sp}(n)\times\mathrm{U}(1)$.

\subsection{\texorpdfstring{The branching law $\SpnC\downarrow\mathrm{Sp}(n)$}{The branching law Sp(n,C) | Sp(n)}}

The above discussion leads to the determination of the branching law $\mathrm{Sp}(n,\C)\downarrow\mathrm{Sp}(n)$ of $\pi_{\mu,\delta}$ seen in the compact picture.

\begin{proposition}[$K$-type formula]\label{Ktypeformula}
The restriction of $\pi_{\mu,\delta}$ to the maximal compact subgroup $\mathrm{Sp}(n)$ decomposes into irreducible components as follows:
\[\left.\pi_{\mu,\delta}\right|_{\mathrm{Sp}(n)}\simeq\sideset{}{^\oplus}\sum_{\substack{l-l'\geq|\delta|\\l-l'\equiv\delta[2]}} \Vll.\]
Every $K$-type $\Vll$ occurs at most once in this decomposition.
\end{proposition}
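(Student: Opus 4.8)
The plan is to work entirely in the compact picture of Section~\ref{compict} and to reduce the statement to the isotypic decomposition~(\ref{L2SVll}) obtained in the preceding subsection. Recall that in this picture $\pi_{\mu,\delta}$ is carried by $L^2(\S)_\delta$ and that the restriction of the $G$-action to $K=\mathrm{Sp}(n)$ is simply the left regular representation $f\mapsto f(k^{-1}\cdot)$. In particular the branching law $\mathrm{Sp}(n,\C)\downarrow\mathrm{Sp}(n)$ of $\pi_{\mu,\delta}$ does not depend on $\mu$ and coincides with the decomposition of $L^2(\S)_\delta$ into irreducible $\mathrm{Sp}(n)$-submodules, so the whole problem is to identify that decomposition.

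First I would identify $L^2(\S)_\delta$ with the $\C_\delta$-isotypic component of $L^2(\S)$ for the right $\mathrm{U}(1)$-action of~(\ref{groupesortho}), using the identifications~(\ref{RCH})--(\ref{CartanU1}) and the convention fixed in Section~\ref{compict}; since the index set appearing below is invariant under $\delta\leftrightarrow-\delta$, the precise sign convention is immaterial. Extracting from~(\ref{L2SVll}) the summands with the prescribed value of $\delta$ then yields at once
\[\left.\pi_{\mu,\delta}\right|_{\mathrm{Sp}(n)}\simeq L^2(\S)_\delta\simeq\sideset{}{^\oplus}\sum_{\substack{l-l'\geq|\delta|\\l-l'\equiv\delta[2]}}\Vll,\]
which is the asserted formula.

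For the multiplicity statement I would argue as follows: a pair $(l,l')$ with $l\geq l'\geq0$ is uniquely recovered from the highest weight $(l,l',0,\ldots,0)$, so distinct pairs give pairwise non-isomorphic $\mathrm{Sp}(n)$-modules $\Vll$; and for a fixed pair the module $\Vll\otimes\C_\delta$ occurs in $\Vll\otimes V_1^{l-l'}$ with multiplicity equal to the multiplicity of the weight $\delta$ in the irreducible $\mathrm{SU}(2)$-representation $V_1^{l-l'}$, which is $1$ by~(\ref{VllCdelta}). Hence each $K$-type $\Vll$ occurs at most once. The genuinely delicate ingredient, namely the quaternionic analogue~(\ref{L2SVll}) of the classical spherical-harmonics decomposition, has already been established in Section~\ref{RCHsph} by appealing to~\cite{HoweTan}; consequently the only thing left to do here is the bookkeeping of matching the $\mathrm{U}(1)$-grading of the compact picture with the right-action grading of~(\ref{groupesortho}) and reading off the constraints. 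I therefore expect no real obstacle, the only point requiring care being the consistent tracking of the identifications~(\ref{RCH})--(\ref{CartanU1}) so that the circle parameter acts in the same way on both sides of the isomorphism.
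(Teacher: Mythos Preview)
Your proposal is correct and follows essentially the same route as the paper: both arguments pass to the compact picture on $L^2(\S)_\delta$, observe that the restriction to $K$ is the natural $\mathrm{Sp}(n)$-action on functions over the quaternionic sphere, and then simply fix $\delta$ in the decomposition~(\ref{L2SVll}); the multiplicity-free statement is likewise obtained in both cases from the fact that the weight $\delta$ occurs at most once in $V_1^{l-l'}$ via~(\ref{VllCdelta}). Your write-up is slightly more detailed about the bookkeeping, but there is no substantive difference in method.
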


\begin{proof}
The discussion of Paragraph~\ref{compict} shows how $\pi_{\mu,\delta}$ can be realised on $L^2\left(\S\right)_\delta$. In this picture, the restriction to $K$ of the action coincides with the natural representation of $\mathrm{Sp}(n)$ on functions over the unit sphere of $\quat^n$. Subsequently, the first statement reduces to fixing $\delta$ in~(\ref{L2SVll}). The fact that this decomposition is multiplicity-free relies on the observation that although a summand $\C_\delta$ appears in $V_1^{l-l'}$ for various values of $l-l'$, these do not involve the same space $\Vll$ more than once.
\end{proof}

\subsection{\texorpdfstring{Action of the Knapp-Stein operators on the $K$-types}{Action of the Knapp-Stein operators on the K-types}}

The last result of this section describes the behaviour of the Knapp-Stein intertwiners on each $K$-type. Proposition~\ref{propFsympT} proves that  $\widetilde{\mathcal{T}}_{\mu,\delta}$ intertwines $\pi_{\mu,\delta}$ and $\pi_{-\mu,-\delta}$. As a consequence of the formula in Proposition~\ref{Ktypeformula}, these representations have the same $K$-types $\Vll$. Taking into account the right action of $\mathrm{U}(1)$ seen a Cartan subgroup of $\mathrm{Sp}(1)$ \textit{via}~(\ref{groupesortho}) and the corresponding isotypic decomposition~(\ref{L2SVll}) of $L^2\left(\S\right)$, one is led to study the restriction \[\begin{array}{rccc}\mathcal{T}_{\mu,\delta}^{l,l'}:&\Vll\otimes\C_\delta&\longrightarrow&\Vll\otimes\C_{-\delta}\\&\cap&&\cap\\&\Vll\otimes V_1^{l-l'}&&\Vll\otimes V_1^{l-l'}\end{array}.\]

In order to specifiy how $\mathcal{T}_{\mu,\delta}$ restricts to an operator of $\Vll$, it is necessary to fix an identification between $\Vll\otimes\C_\delta$ and $\Vll\otimes\C_{-\delta}$. Using the isomorphism~(\ref{Sp1SU2}) between $\mathrm{Sp}(1)$ and $\mathrm{SU}(2)$, 
it is done by choosing a non-trivial element $w$ in the Weyl group $W(\mathrm{SU}(2):\mathrm{U}(1))$ and letting it act by conjugation on $\mathrm{SU}(2)$. Since such an action inverts the elements in the torus $\mathrm{U(1)}$, it provides an isomorphism $\iota_w$ from $\Vll$ to itself, that exchanges the summands $\C_\delta$ and $\C_{-\delta}$ appearing in~(\ref{VllCdelta}):
\begin{equation}\label{iotaw}\begin{array}{rccc}\iota_w:&V_1^{l-l'}&\stackrel{\sim}{\longrightarrow}&V_1^{l-l'}\\&\cup&&\cup\\&\C_\delta&\stackrel{\sim}{\longrightarrow}&\C_{-\delta}\end{array}.\end{equation}

The restricted intertwining operator under this identification will be denoted by $\mathcal{T}_{\mu,\delta}^{l,l',w}$: \[\mathcal{T}_{\mu,\delta}^{l,l',w}=\mathcal{T}_{\mu,\delta}^{l,l'}\otimes\iota_w.\]

From now on, $w$ will be the class modulo $\mathrm{U}(1)$ of $\left[\begin{array}{cc}0&1\\-1&0\end{array}\right]$. Since this matrix of $\mathrm{SU}(2)$ normalises $\mathrm{U}(1)$, it defines a Weyl element, hence fixes the above definition of $\mathcal{T}_{\mu,\delta}^{l,l',w}$.

\begin{proposition}\label{spec}
Let $\delta\neq0$. For $l,l'\in\N$ such that $l-l'\geq|\delta|$ and $l-l'\equiv\delta[2]$, the restriction $\mathcal{T}_{\mu,\delta}^{l,l',w}$ of the normalised Knapp-Stein intertwiner $\widetilde{\mathcal{T}}_{\mu,\delta}$ acts on $\Vll$ as the scalar \[\pi^{-\mu}(-i)^{-(l+l')}\frac{\Gamma\left(n+\frac{l+l'+\mu}{2}\right)}{\Gamma\left(n+\frac{l+l'-\mu}{2}\right)}.\]
\end{proposition}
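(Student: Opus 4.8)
The plan is to combine the identification $\widetilde{\mathcal{T}}_{\mu,\delta}=\mathcal{F}_{\mathrm{symp}}$ from Proposition~\ref{propFsympT} with the explicit Fourier transform formula of Lemma~\ref{Fepslambda}, transported from the $\mathcal{F}_\varepsilon$-picture on $\R^{2N}$ to the $\mathcal{F}_{\mathrm{symp}}$-picture on $\C^N$. Concretely, I would first pick a convenient highest-weight vector in the $K$-type $\Vll$ realised inside $\left.\Hab\right|_{\S}$ with $\beta-\alpha=\delta$; by the decomposition~(\ref{isotypRC}) this sits inside $\left.\mathcal{H}^k(\R^{2N})\right|_{\S}$ with $k=\alpha+\beta$. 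For such a spherical harmonic $p$, extend it homogeneously as $p_{\mu-N}$ (in the sense of Definition~\ref{plambda}, with $\lambda=\mu-N$, so that $p_{\mu-N}\in V^\infty_{-\mu,-\delta}$) and apply $\mathcal{F}_{\mathrm{symp}}$. Since $\mathcal{F}_{\mathrm{symp}}f=\mathcal{F}_{\C^N}f(J\,\cdot)$ and, by Remark~\ref{Feps}, $\mathcal{F}_{\C^N}$ is the conjugate transform $\mathcal{F}_\varepsilon$ on $\R^{2N}$, Lemma~\ref{Fepslambda} gives $\mathcal{F}_\varepsilon\,p_{\mu-N}=B_{2N}(\mu-N,k)\,p^\varepsilon_{-\mu-N}$. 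Dividing by $C_N(\mu,\delta)$ and evaluating on the unit sphere yields a scalar multiple of $p$ composed with the involution induced by $\varepsilon J$ on $\S$ — which, after tracking through the identifications~(\ref{RCH}), is precisely the conjugation-by-$w$ map $\iota_w$ built into the definition of $\mathcal{T}^{l,l',w}_{\mu,\delta}$. Hence $\mathcal{T}^{l,l',w}_{\mu,\delta}$ acts by the scalar $B_{2N}(\mu-N,k)/C_N(\mu,\delta)$, and the whole computation reduces to a gamma-function bookkeeping exercise.

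The second half is therefore purely algebraic: substitute $\lambda=\mu-N$ and $k=\alpha+\beta=l+l'$ (this last equality needs a short argument — see below) into
\[
B_{2N}(\mu-N,l+l')=\pi^{-\mu}\,i^{-(l+l')}\,\frac{\Gamma\!\left(N+\frac{(l+l')+(\mu-N)}{2}\right)}{\Gamma\!\left(\frac{(l+l')-(\mu-N)}{2}\right)}
=\pi^{-\mu}\,i^{-(l+l')}\,\frac{\Gamma\!\left(n+\frac{l+l'+\mu}{2}\right)}{\Gamma\!\left(n+\frac{l+l'-\mu}{2}\right)},
\]
using $N=2n$. Then divide by $C_N(\mu,\delta)$; the two cases ($\delta$ even, $\delta$ odd) of $C_N$ are designed precisely so that, after the $\Gamma$-duplication and reflection formulas already invoked in the proof of Proposition~\ref{propFsympT}, the ratio collapses to a power of $\pi$ times a sign. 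One checks the parity of $l+l'$ matches that of $\delta$ (forced by $l-l'\equiv\delta\,[2]$), so the correct branch of $C_N$ is selected and the residual sign combines with $i^{-(l+l')}$ to give $(-i)^{-(l+l')}$. This matches the claimed eigenvalue.

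The main obstacle — and the step deserving genuine care rather than hand-waving — is the bookkeeping of \emph{which} degree $k$ and \emph{which} involution appear. For the degree: a harmonic polynomial in $\left.\Hab\right|_{\S}$ restricts to the $K$-type $\Vll$ only for the specific pair $(\alpha,\beta)$ with $\alpha+\beta$ minimal among those giving that highest weight, and one must verify $\{l,l'\}=\{\alpha,\beta\}$, i.e. $l+l'=k$ and $l-l'=|\delta|=|\beta-\alpha|$; this is exactly the content of the quaternionic branching analysis of Section~\ref{RCHsph} combined with~(\ref{isotypRC}), and it is the point where the ``slightly more delicate'' quaternionic bookkeeping mentioned in the introduction enters. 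For the involution: one must check that $\varepsilon J$ acts on $\S\simeq S_1(\quat^n)$ as right multiplication by the quaternion $j$ (up to an element of $\mathrm{Sp}(n)$ acting on the left, which is harmless since we track a $K$-equivariant scalar), and that under~(\ref{Sp1SU2}) this is the Weyl element $w$; only then is the normalising factor $B_{2N}/C_N$ genuinely the scalar by which $\mathcal{T}^{l,l',w}_{\mu,\delta}$ acts on $\Vll$. Once these two identifications are pinned down, the remaining gamma-function manipulation is routine and parallels the end of the proof of Proposition~\ref{propFsympT}.
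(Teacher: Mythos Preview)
Your overall strategy matches the paper's, but there is a genuine error that would make the computation fail: you divide by $C_N(\mu,\delta)$ where no such division is needed. Proposition~\ref{propFsympT} asserts that the \emph{normalised} operator already satisfies $\widetilde{\mathcal{T}}_{\mu,\delta}=\mathcal{F}_{\mathrm{symp}}$; the constant $C_N$ was the ratio between the \emph{unnormalised} integral $\mathcal{T}_{\mu,\delta}$ and $\mathcal{F}_{\mathrm{symp}}$, and has already been absorbed. Hence the eigenvalue is $B_{2N}(\mu-N,l+l')$ times the sign coming from the $\iota_w$-identification, not $B_{2N}/C_N$. The ``collapse'' you describe would not happen: $C_N$ depends on $\mu$ through $\Gamma\!\left(\frac{1\pm(\mu+N)}{2}\right)$, which cancels against nothing in $B_{2N}(\mu-N,l+l')$, so your quotient would carry a spurious $\mu$-dependent factor.

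The handling of the sign is also off. Under the identifications~(\ref{RCH}) the involution $J\varepsilon$ on $\S$ is right multiplication by the quaternion $j\in\mathrm{Sp}(1)$; both $j$ and the chosen Weyl representative $w$ are represented by $\left[\begin{smallmatrix}0&1\\-1&0\end{smallmatrix}\right]$. They are therefore not inverse to each other: composing the $j$-action with $\iota_w$ gives the action of $-I_2$ on $V_1^{l-l'}$, i.e.\ the scalar $(-1)^{l-l'}=(-1)^\delta=(-1)^{l+l'}$. It is this sign---not anything extracted from $C_N$---that turns $i^{-(l+l')}$ into $(-i)^{-(l+l')}$. Finally, your side claim $\{l,l'\}=\{\alpha,\beta\}$ is false whenever $|\delta|<l-l'$; what is true, and all that is needed, is $\alpha+\beta=l+l'$, which follows directly from the branching $\mathcal{H}^k(\R^{2N})\simeq\bigoplus_{l+l'=k}V_n^{l,l'}\otimes V_1^{l-l'}$ without routing through $\Hab$ at all.
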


\begin{proof}
Let $p\in\Vll\otimes\C_\delta\subset\Vll\otimes V_1^{l-l'}$. By compatibility of the isotypic decompositions~(\ref{isotypRC}) and ~(\ref{L2SVll}), $p$ can be seen as the restriction to $\S$ of a polynomial in $\mathcal{H}^{l+l'}(\R^{2N})$. In view of Proposition~\ref{propFsympT} and Lemma~\ref{Fepslambda}, the operator $\widetilde{\mathcal{T}}_{\mu,\delta}$ maps $p_{\mu-N}$ to $B_{2N}(\mu-N,l+l')p_{-\mu-N}(J\varepsilon\,\cdot)$. Under the identifications~(\ref{RCH}), applying $J\varepsilon$ to vectors in $\quat^n$ from the left amounts to multiplying them by the quaternionic unit $j$ from the right. It follows that \[\mathcal{T}_{\mu,\delta}^{l,l',w}p=B_{2N}(\mu-N,l+l')\iota_w\left(p.j\right)\] where $j\in\mathrm{Sp}(1)$ acts \textit{via} $V_1^{l-l'}$. Identifying $\mathrm{Sp}(1)$ to $\mathrm{SU}(2)$ by~(\ref{Sp1SU2}) again, we realise $V_1^{l-l'}$ as the classical representation of $\mathrm{SU}(2)$ on homogeneous polynomials of degree $l-l'$ in two variables $x$ and $y$, denoted by $\Phi^{l-l'}[x,y]$. Since $j$ and $w$ are both represented by the matrix $\left[\begin{array}{cc}0&1\\-1&0\end{array}\right]$, one has \[\iota_w\left(p.j\right)=p.(-I_2)=(-1)^\delta p,\] hence the conclusion: $\mathcal{T}_{\mu,\delta}^{l,l',w}\,p=(-1)^\delta B_{2N}(\mu-N,l+l')p$.
\end{proof}

\subsection{\texorpdfstring{Analysis of $\pi_{0,0}$}{Analysis of pi0,0}}\label{reduction}

Let us discuss the elements of the dual space $\widehat{G}$ obtained from the degenerate principal series, that is the equivalence classes of irreducible unitary subrepresentations of $\left\{\pi_{i\lambda,\delta}\,,\,\R\times\Z\right\}$. It is proved in \cite{Gross} that $\pi_{i\lambda,\delta}$ is irreducible if $(\lambda,\delta)\neq(0,0)$. Moreover, the Knapp-Stein operator $\widetilde{\mathcal{T}}_{i\lambda,\delta}$ (or its algebraic version $T_{i\lambda,\delta}$ in the non-standard model) exhibits a unitary equivalence between $\pi_{i\lambda,\delta}$ and $\pi_{-i\lambda,-\delta}$. It is also established in \cite{Gross} that $\pi_{0,0}$ splits into the direct sum of two irreducible subrepresentations.

The following result describes this splitting in terms of eigenspaces of the Knapp-Stein operators and specifies the $K$-module structure of the summands.

\begin{theorem}\label{Ktypethm}
The representation $\pi_{0,0}$ of $G$ is reducible and decomposes as \[\pi_{0,0}\simeq\pi_{0,0}^-\oplus\pi_{0,0}^+,\] where the irreducible summands $\pi_{0,0}^\pm$ are characterised by:

\begin{enumerate}
\item their $K$-type formula:
\[\pi_{0,0}^-\simeq\sideset{}{^\oplus}\sum_{l-l'\equiv2[4]}\Vll\qquad\text{and}\qquad\pi_{0,0}^+\simeq\sideset{}{^\oplus}\sum_{l-l'\equiv0[4]}\Vll;\]
\item the classical Knapp-Stein intertwiners: $\pi_{0,0}^+$ (resp. $\pi_{0,0}^-$) is the eigenspace for the eigenvalue $1$ (resp. $-1$) of $\widetilde{\mathcal{T}}_{0,0}$ acting on $V_{0,0}$.
\item the algebraic Knapp-Stein intertwiners: $\pi_{0,0}^+$ (resp. $\pi_{0,0}^-$) is the eigenspace for the eigenvalue $1$ (resp. $-1$) of $T_{0,0}$ acting on $L^2\left(\C^{2m+1}\right)$.
\end{enumerate}
\end{theorem}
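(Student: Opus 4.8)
\textbf{Proof strategy for Theorem~\ref{Ktypethm}.}

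The plan is to prove the three characterisations in sequence, each building on the previous one, with the $K$-spectrum computation of Proposition~\ref{spec} serving as the central computational input. The decomposition $\pi_{0,0}\simeq\pi_{0,0}^-\oplus\pi_{0,0}^+$ itself is quoted from \cite{Gross}; what remains is to identify the two summands.

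First I would establish part (2), the characterisation by eigenspaces of $\widetilde{\mathcal{T}}_{0,0}$. By Proposition~\ref{propFsympT}, $\widetilde{\mathcal{T}}_{0,0}=\mathcal{F}_{\mathrm{symp}}|_{V_{0,0}}$ is a \emph{unitary} self-intertwiner of $\pi_{0,0}$ (note that $(\mu,\delta)=(0,0)$ is its own negative), so $V_{0,0}$ splits as the orthogonal direct sum of the $\pm1$-eigenspaces of $\widetilde{\mathcal{T}}_{0,0}$, each of which is $G$-invariant. To evaluate the eigenvalue on a given $K$-type $\Vll$, I would specialise the computation in the proof of Proposition~\ref{spec}: for $\delta=0$ one has $l-l'\in2\N$, and the same argument shows that $\widetilde{\mathcal{T}}_{0,0}$ acts on the copy of $\Vll$ (with $l+l'$ even, since $l-l'$ is even iff $l+l'$ is) via the scalar $B_{2N}(-N,l+l')$ together with the sign contributed by $\iota_w$ and the action of $j$. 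Unwinding the constant $B_{2N}(\mu-N,l+l')$ at $\mu=0$: the Gamma factors cancel to give ratio $1$, the factor $\pi^{-\mu}$ becomes $1$, and one is left with $(-i)^{-(l+l')}$. Since $l+l'$ is even, write $l+l'=2q$; then $(-i)^{-2q}=(-1)^{-q}=(-1)^q$, and $q=\tfrac{l+l'}{2}\equiv\tfrac{l-l'}{2}\bmod 2$. Hence the eigenvalue is $+1$ exactly when $\tfrac{l-l'}{2}$ is even, i.e.\ $l-l'\equiv0\,[4]$, and $-1$ when $l-l'\equiv2\,[4]$. This simultaneously proves part (1) — defining $\pi_{0,0}^+$ and $\pi_{0,0}^-$ as the $\pm1$-eigenspaces gives precisely the stated $K$-type formulas — and part (2). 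Irreducibility of each summand then follows from \cite{Gross}, since Gross already knows $\pi_{0,0}$ has exactly two irreducible constituents and our eigenspace decomposition is a genuine (non-trivial) $G$-stable splitting, so the two must coincide.

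For part (3), the algebraic intertwiner $T_{0,0}$ in the non-standard model of Section~\ref{intertwiners} is, by the main result of that section (Theorem~\ref{algKS}), conjugate to the normalised Knapp-Stein operator $\widetilde{\mathcal{T}}_{0,0}$ via the unitary isomorphism between the compact (or induced) picture and the non-standard model $L^2(\C^{2m+1})$. An intertwining unitary equivalence carries the $+1$-eigenspace of $\widetilde{\mathcal{T}}_{0,0}$ onto the $+1$-eigenspace of $T_{0,0}$ and likewise for $-1$; transporting the identifications $\pi_{0,0}^\pm$ from part (2) through this equivalence yields exactly statement (3). This step is essentially formal once Theorem~\ref{algKS} is available.

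The main obstacle is the sign bookkeeping in the specialisation of Proposition~\ref{spec} to $\delta=0$: one must be careful that the proof of Proposition~\ref{spec} was stated for $\delta\neq0$, so the contribution of $\iota_w$ and the right action of $j$ needs to be re-examined at $\delta=0$ (where $\C_\delta=\C_{-\delta}$ and $\iota_w$ acts within a single weight space), and one must confirm that the net scalar is genuinely $(-i)^{-(l+l')}=(-1)^{(l-l')/2}$ rather than picking up an extra sign from the Weyl-element normalisation. Once that is pinned down, parts (1) and (2) fall out together, and part (3) is immediate from Theorem~\ref{algKS}.
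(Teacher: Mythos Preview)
Your overall strategy coincides with the paper's: compute the eigenvalue of $\widetilde{\mathcal{T}}_{0,0}$ on each $K$-type by adapting the proof of Proposition~\ref{spec}, invoke Gross for irreducibility of the two pieces, and then transport (2) to (3) via Theorem~\ref{algKS}. Part~(3) is handled exactly as you propose.

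There is, however, a genuine gap in your sign computation. The claim ``$q=\tfrac{l+l'}{2}\equiv\tfrac{l-l'}{2}\bmod 2$'' is simply false: the difference of the two sides is $l'$, which can be odd (e.g.\ $(l,l')=(1,1)$ already gives $1\not\equiv 0$). So the scalar $(-1)^{(l+l')/2}$ you obtain by naively specialising the Proposition~\ref{spec} formula does \emph{not} reduce to the $l-l'\bmod 4$ criterion. You correctly flag the $\iota_w$ issue as the obstacle, but you have not resolved it: the formula in Proposition~\ref{spec} computes $\mathcal{T}^{l,l',w}_{\mu,\delta}=\widetilde{\mathcal{T}}_{\mu,\delta}\otimes\iota_w$, and at $\delta=0$ the map $\iota_w$ is a nontrivial automorphism of the one-dimensional weight space $\C_0\subset V_1^{l-l'}$, so it must be removed, not set to $1$.

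The paper does this explicitly. Since $\C_0=\C_{-0}$, no identification is needed and one computes $\widetilde{\mathcal{T}}_{0,0}\,p=B_{2N}(-N,l+l')\,(p\cdot j)$ directly. Realising $V_1^{l-l'}$ on homogeneous polynomials of degree $l-l'$ in $(x,y)$ with $j$ acting by $(x,y)\mapsto(y,-x)$, the $0$-weight vector $x^{(l-l')/2}y^{(l-l')/2}$ is sent to $(-1)^{(l-l')/2}$ times itself. Combined with $B_{2N}(-N,l+l')=i^{-(l+l')}=(-1)^{(l+l')/2}$, the eigenvalue on $V_n^{l,l'}$ is $(-1)^{(l+l')/2}\cdot(-1)^{(l-l')/2}=(-1)^{l}=(-1)^{l'}$ (recall $l\equiv l'\bmod 2$ here). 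This is the missing $j$-contribution you alluded to but did not compute. Note, incidentally, that the resulting criterion is the parity of $l'$, which is what the paper's figure depicts; this does not literally agree with the $l-l'\bmod 4$ condition in the statement, so even after fixing your argument you should expect the answer to read ``$l'$ even/odd'' rather than ``$l-l'\equiv 0,2\ [4]$''.
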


\begin{proof}To establish \textit{(1)} and \textit{(2)}, we proceed as in the proof of Proposition \ref{spec}, except that no choice of a Weyl representative is required to identify $V_1^{l-l'}\otimes\C_\delta$ to $V_1^{l-l'}\otimes\C_{-\delta}$ when $\delta=0$. For $p\in\Vll\otimes\C_0\subset\Vll\otimes V_1^{l-l'}$ one has  $\widetilde{\mathcal{T}}_{0,0}\,p = B_{2N}(-N,l+l')(p.j)$ where $j$ acts by the representation $V_1^{l-l'}$ of $\mathrm{Sp}(1)$. Identifying $\mathrm{Sp}(1)$ to $\mathrm{SU}(2)$ so that $j$ is represented by $\left[\begin{array}{cc}0&1\\-1&0\end{array}\right]$, and $V_1^{l-l'}$ to $\Phi^{l-l'}[x,y]$, it appears that $j$ acts on the variables by $(x,y)\mapsto(y,-x)$ so that a 0-weight vector $\xi$ is send to $(-1)^{l-l'}\xi$ by $j$. It follows that the intertwining operator acts on every $K$-type $\Vll$ of $\pi_{0,0}$ by \[B_{2N}(-N,l+l')(-1)^{\frac{l-l'}{2}}.\] Since $B_{2N}(-N,l+l')=(-1)^{-\frac{l+l'}{2}}$ by definition, it follows that \[\widetilde{\mathcal{T}}_{0,0}\,p=(-1)^{-l'}p,\] hence the result. We postpone the proof of \textit{(3)} to the next paragraph where the algebraic Knapp-Stein operators are defined and studied.
\end{proof}

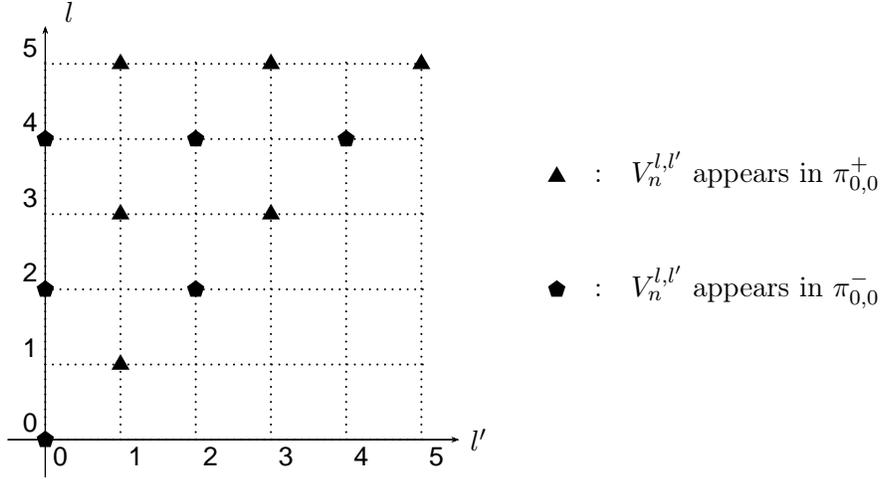
\begin{figure}[h!]
\begin{center}
\begin{pspicture*}(-1,-1)(11.2,6)
\psaxes[ticks=none, linewidth=0.01, labels=none]{->}(0,0)(-0.5,-0.5)(5.5,5.5)

\psdot[dotstyle=triangle*, dotsize=6pt](1,1)
\psdot[dotstyle=triangle*, dotsize=6pt](1,3)
\psdot[dotstyle=triangle*, dotsize=6pt](1,5)
\psdot[dotstyle=triangle*, dotsize=6pt](3,3)
\psdot[dotstyle=triangle*, dotsize=6pt](3,5)
\psdot[dotstyle=triangle*, dotsize=6pt](5,5)
\psdot[dotstyle=pentagon*, dotsize=6pt](0,0)
\psdot[dotstyle=pentagon*, dotsize=6pt](0,2)
\psdot[dotstyle=pentagon*, dotsize=6pt](0,4)
\psdot[dotstyle=pentagon*, dotsize=6pt](2,2)
\psdot[dotstyle=pentagon*, dotsize=6pt](2,4)
\psdot[dotstyle=pentagon*, dotsize=6pt](4,4)

\psgrid[subgriddiv=1,griddots=10](0,0)(5,5)

\psdot[dotstyle=triangle*, dotsize=6pt](6.8,3.5)
\uput{0.5}[r](6.8,3.55){:\quad $V_n^{l,l'}$ appears in $\pi_{0,0}^+$}
\psdot[dotstyle=pentagon*, dotsize=6pt](6.8,2.0)
\uput{0.5}[r](6.8,2.03){:\quad $V_n^{l,l'}$ appears in $\pi_{0,0}^-$}
\uput{0.5}[ur](-0.1,5.2){$l$}
\uput{0.65}[r](5,0){$l'$}
\end{pspicture*}

\end{center}
\caption{Repartition of the $K$-types of $\pi_{0,0}$}
\end{figure}

\section{A non-standard model and intertwining operators}\label{intertwiners}

This section is devoted to the description of a new model of degenerate principal series, in which intertwining operators happen to take an algebraic form.

\subsection{Non-standard model}

Let $\mu$ and $\delta$ be as above. The \emph{non-compact picture} described in Paragraph~\ref{noncompict} allowed to realise $\pi_{\mu,\delta}$ on the Hilbert space \[L^2\left(\HC\right)\simeq L^2\left(\C\times\C_1^m\times\C_2^m\right)\simeq L^2\left(\C^{2m+1}\right).\] Let $\mathcal{F}_{\C\times\C_2^m}$ be the partial Fourier transform defined on $L^2(\C\times\C_1^m\times\C_2^m)$ by \[\mathcal{F}_{\C\times\C_2^m}f(\tau,X_1,\xi_2)=\int_{\C\times\C_2^m}f(t,X_1,X_2)e^{-2i\pi\Re{\left(t\tau+\scal{X_2}{\xi_2}\right)}}\,dX_2\,dt.\]

\begin{definition}
The \emph{non-standard model} $\mathcal{U}_{\mu,\delta}$ of $\pi_{\mu,\delta}$ is the image of the non-compact picture $L^2(\HC)$ by \[\mathcal{F}_{\C\times\C_2^m}:L^2\left(\C^{2m+1}\right)\longrightarrow L^2\left(\C^{2m+1}\right),\] that is $\mathcal{U}_{\mu,\delta}=L^2\left(\C^{2m+1}\right)$ as a Hilbert space and the action of an element $g\in G$ on $\mathcal{U}_{\mu,\delta}$ is given by $\mathcal{F}_{\C\times\C_2^m}\circ\pi_{\mu,\delta}(g)\circ\mathcal{F}_{\C\times\C_2^m}^{-1}$.
\end{definition}

The equivalences between the induced, non-compact and non-standard models of $\pi_{\mu,\delta}$ are summed up in the following diagram:

\begin{equation}\label{alphamudelta}\alpha_{\mu,\delta}:\xymatrix @C=15mm @R=1mm {V_{\mu,\delta}\;\ar^-{\sim}_-{\mathrm{restrict.}}[r]&\;L^2(\HC)\;\ar^-{\sim}_-{\mathcal{F}_{\C\times\C_2^m}}[r]&\;\mathcal{U}_{\mu,\delta}\\
f\: \ar@{|->}[r] & \quad F\quad \ar@{|->}[r]&H}
\end{equation}

where, according to the embedding~(\ref{embed}), \[F(t,X_1,X_2)=f(1,X_1,2t,X_2)\] for $t\in\C$ and $(X_1,X_2)\in\C_1^m\times\C_2^m$.

\begin{lemma}\label{HFC}
Let $f\in V_{\mu,\delta}$. With notations as above, \[H(\tau,X_1,\xi_2)=\frac{1}{2}\mathcal{F}_{\C_2^n}f\left(1,X_1,\frac{\tau}{2},\xi_2\right).\]
\end{lemma}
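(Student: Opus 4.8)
The plan is to unwind the definitions of the two partial Fourier transforms and the restriction map, and then match variables carefully. Recall that $f \in V_{\mu,\delta}$, that $F(t,X_1,X_2) = f(1,X_1,2t,X_2)$ comes from the embedding~(\ref{embed}), and that $H = \mathcal{F}_{\C\times\C_2^m} F$. So first I would write out
\[
H(\tau,X_1,\xi_2) = \int_{\C\times\C_2^m} f(1,X_1,2t,X_2)\, e^{-2i\pi\Re{\left(t\tau + \scal{X_2}{\xi_2}\right)}}\,dX_2\,dt.
\]
The goal is to recognise the right-hand side as a value of $\mathcal{F}_{\C_2^n} f$, the partial Fourier transform in the \emph{second} $\C^n$-factor of $\C^N = \C_1^n \times \C_2^n$. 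Here the subtlety is bookkeeping: under the embedding~(\ref{embed}) the ``$\C_2^n$'' coordinate of a point of $\C^N$ is split as $(2s, X_2) \in \C \times \C_2^m$, matching $\C_2^n \simeq \C \times \C^m$, while the ``$\C_1^n$'' coordinate is $(1, X_1)$.

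Second, I would perform the change of variables $t = s$, i.e. set $2t = u$ so that $dt = \tfrac12\,du$, turning the $t$-integral into an integral over the first coordinate $u$ of $\C_2^n$. The phase $e^{-2i\pi\Re(t\tau)}$ becomes $e^{-2i\pi\Re(u\tau/2)} = e^{-2i\pi\Re\scal{u}{\tau/2}}$ (scalar case of the bilinear form), and the factor $\tfrac12$ from the Jacobian is exactly the $\tfrac12$ in the claimed formula. Combining the $u$-integral and the $X_2$-integral into a single integral over $\C_2^n \simeq \C\times\C^m$, and noting that the combined phase $e^{-2i\pi\Re(\scal{u}{\tau/2} + \scal{X_2}{\xi_2})}$ is precisely $e^{-2i\pi\Re\scal{(u,X_2)}{(\tau/2,\xi_2)}}$ relative to the bilinear form on $\C_2^n$, the right-hand side becomes
\[
\tfrac12 \int_{\C_2^n} f(1,X_1,\cdot)\, e^{-2i\pi\Re\scal{\cdot}{(\tau/2,\xi_2)}} = \tfrac12\,\mathcal{F}_{\C_2^n} f\!\left(1,X_1,\tfrac{\tau}{2},\xi_2\right),
\]
which is the asserted identity.

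I do not expect any genuine obstacle here: the statement is essentially a compatibility lemma between the various Fourier-type normalisations, and the proof is a single change of variables plus a regrouping of integration domains. The only point that requires a little care — and the step I would present most explicitly — is the identification of coordinates under the embedding~(\ref{embed}): one must check that the ``$2s$'' appearing there (equivalently the ``$2t$'' in the definition of $F$) is the coordinate dual, under $\mathcal{F}_{\C_2^n}$, to the first component $\tfrac{\tau}{2}$ of the frequency variable, and that the factor of $2$ discrepancy between $t$ and $2t$ is what produces both the rescaled argument $\tfrac{\tau}{2}$ and the overall constant $\tfrac12$. For full rigour one should justify the manipulations on a dense subspace such as $V_{\mu,\delta}^\infty \cap \mathcal{S}$ (where all integrals converge absolutely and Fubini applies) and then extend to $V_{\mu,\delta}$ by continuity of all the transforms involved; this is routine given that $\mathcal{F}_{\C\times\C_2^m}$ and $\mathcal{F}_{\C_2^n}$ are unitary on the relevant $L^2$-spaces.
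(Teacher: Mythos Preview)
Your argument is exactly the paper's: unfold the definition of $H$, substitute $F(t,X_1,X_2)=f(1,X_1,2t,X_2)$, change variables $u=2t$, and recognise the resulting integral over $\C\times\C_2^m\simeq\C_2^n$ as $\mathcal{F}_{\C_2^n}f$ evaluated at $(1,X_1,\tau/2,\xi_2)$. One small caution: you write ``$dt=\tfrac12\,du$'', which is the real Jacobian; since $t\in\C$ the change $u=2t$ has real Jacobian $|2|^2=4$, so strictly $dt=\tfrac14\,du$ --- but the paper's own computation produces the factor $\tfrac12$ in the same step, so your argument matches the paper verbatim (and the constant is in any case immaterial for the application in Theorem~\ref{algKS}, where the lemma is used twice and any such constant cancels).
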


\begin{proof}
According to the notations in the definition~(\ref{alphamudelta}) of $\alpha_{\mu,\delta}$, one has \begin{eqnarray*}H(\tau,X_1,\xi_2)&=&\int_{\C\times\C_2^m}F(t,X_1,X_2)e^{-2i\pi\Re{\left(t\tau+\scal{X_2}{\xi_2}\right)}}\,dX_2\,dt\\
&=&\int_{\C_2^n}f(1,X_1,2t,X_2)e^{-2i\pi\Re{\scal{\left(t,X_2\right)}{\left(\tau,\xi_2\right)}}}\,d(t,X_2)\\
&=&\frac{1}{2}\int_{\C_2^n}f(1,X_1,t,X_2)e^{-2i\pi\Re{\scal{\left(t,X_2\right)}{\left(\frac{\tau}{2},\xi_2\right)}}}\,d(t,X_2),
\end{eqnarray*}
hence the result.
\end{proof}

\subsection{Algebraic Knapp-Stein intertwiners}

We now come to the main point of this section, \textit{id est} the proof that the normalised Knapp-Stein operators considered in Section~\ref{FourierKnappStein} take an algebraic form once expressed in the non-standard model of the previous paragraph. More precisely, for $H\in L^2\left(\C^{2m+1}\right)$, we let \[T_{\mu,\delta}H(s,X_1,X_2)=\left|\frac{s}{2}\right|^{-\mu}\left[s\right]^{-\delta}H\left(s,\frac{2}{s}X_2,\frac{s}{2}X_1\right).\]
The jacobian determinant of the transform \[(s,X_1,X_2)\longmapsto\left(s,\frac{2}{s}X_2,\frac{s}{2}X_1\right)\] is easily seen to have modulus $1$, so that $T_{\mu,\delta}$ is an endomorphism of $L^2\left(\C^{2m+1}\right)$, which turns out to be the realisation of the normalised Knapp-Stein intertwiner $\widetilde{\mathcal{T}}_{\mu,\delta}$ in the non-standard picture:

\begin{theorem}\label{algKS}
For any $(\mu,\delta)\in i\R\times\Z$, the following diagram is commutative:
\[\xymatrix @C=15mm @R=15mm {V_{-\mu,-\delta}\ar^-{\widetilde{\mathcal{T}}_{\mu,\delta}}[r]\ar_{\mathcal{\alpha}_{-\mu,-\delta}}[d]&V_{\mu,\delta\ar^{\mathcal{\alpha}_{\mu,\delta}}[d]}\\\mathcal{U}_{-\mu,-\delta}\ar_-{T_{\mu,\delta}}[r]&\mathcal{U}_{\mu,\delta}}.\]
\end{theorem}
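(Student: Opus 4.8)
The plan is to verify commutativity of the diagram by tracking a function $f\in V_{-\mu,-\delta}$ along both paths and showing the two resulting elements of $\mathcal{U}_{\mu,\delta}=L^2(\C^{2m+1})$ coincide. Going down first: $\alpha_{-\mu,-\delta}(f)$ is the function $H\in L^2(\C^{2m+1})$ described by Lemma~\ref{HFC}, namely $H(\tau,X_1,\xi_2)=\tfrac12\mathcal{F}_{\C_2^n}f(1,X_1,\tfrac{\tau}{2},\xi_2)$. Applying $T_{\mu,\delta}$ then gives, by definition, $T_{\mu,\delta}H(s,X_1,X_2)=\left|\tfrac{s}{2}\right|^{-\mu}[s]^{-\delta}H\bigl(s,\tfrac{2}{s}X_2,\tfrac{s}{2}X_1\bigr)$, which by Lemma~\ref{HFC} equals $\tfrac12\left|\tfrac{s}{2}\right|^{-\mu}[s]^{-\delta}\mathcal{F}_{\C_2^n}f\bigl(1,\tfrac{2}{s}X_2,\tfrac{s}{2},\tfrac{s}{2}X_1\bigr)$. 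Going right first: by Proposition~\ref{propFsympT} we may replace $\widetilde{\mathcal{T}}_{\mu,\delta}$ by $\mathcal{F}_{\mathrm{symp}}|_{V_{-\mu,-\delta}}$, and then apply $\alpha_{\mu,\delta}$, which by Lemma~\ref{HFC} again yields $\tfrac12\mathcal{F}_{\C_2^n}\bigl(\mathcal{F}_{\mathrm{symp}}f\bigr)\bigl(1,X_1,\tfrac{s}{2},X_2\bigr)$ evaluated at the point $(s,X_1,X_2)$.

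So the heart of the matter is the identity
\[
\mathcal{F}_{\C_2^n}\bigl(\mathcal{F}_{\mathrm{symp}}f\bigr)\Bigl(1,X_1,\tfrac{s}{2},X_2\Bigr)
=\left|\tfrac{s}{2}\right|^{-\mu}[s]^{-\delta}\,\mathcal{F}_{\C_2^n}f\Bigl(1,\tfrac{2}{s}X_2,\tfrac{s}{2},\tfrac{s}{2}X_1\Bigr).
\]
My approach to this is to reduce it to Lemma~\ref{flip} together with the homogeneity Lemma~\ref{lemmaFC2}. First I would establish the operator identity $\mathcal{F}_{\C_2^n}\circ\mathcal{F}_{\mathrm{symp}}=(\mathcal{F}_{\C_2^n}\circ\mathcal{F}_{\mathrm{symp}}\circ\mathcal{F}_{\C_2^n}^{-1})\circ\mathcal{F}_{\C_2^n}$, so that Lemma~\ref{flip} gives $\mathcal{F}_{\C_2^n}(\mathcal{F}_{\mathrm{symp}}f)(u,v)=(\mathcal{F}_{\C_2^n}f)(v,u)$, i.e. the partial Fourier transform conjugates $\mathcal{F}_{\mathrm{symp}}$ to the coordinate flip $\C_1^n\leftrightarrow\C_2^n$ on $\C^N=\C_1^n\times\C_2^n$. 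Applying this with $(u,v)=\bigl((1,X_1),(\tfrac{s}{2},X_2)\bigr)\in\C_1^n\times\C_2^n$ turns the left side into $\mathcal{F}_{\C_2^n}f\bigl(\tfrac{s}{2},X_2,1,X_1\bigr)$, where now the first $\C^n$-slot carries $(\tfrac{s}{2},X_2)$ and the second carries $(1,X_1)$. To match the right side I then rescale the first slot: writing $(\tfrac{s}{2},X_2)=\tfrac{s}{2}\cdot(1,\tfrac{2}{s}X_2)$ and applying Lemma~\ref{lemmaFC2} with $a=\tfrac{s}{2}$ (legitimate since $f\in V_{-\mu,-\delta}$) produces the factor $\left|\tfrac{s}{2}\right|^{\mu}[\tfrac{s}{2}]^{\delta}$ and replaces the argument by $\bigl(1,\tfrac{2}{s}X_2,\tfrac{s}{2},\tfrac{s}{2}X_1\bigr)$. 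Since $\delta\in\Z$ and $\tfrac{s}{2}$ and $s$ have the same argument for real $s>0$—and more care is needed for general $s\in\C^\times$, where $[\tfrac{s}{2}]=[s]$ still holds because $\tfrac12>0$—the scalar works out to exactly the inverse of $\left|\tfrac{s}{2}\right|^{-\mu}[s]^{-\delta}$, completing the identification. Throughout, the reflection variable in $\mathcal{F}_{\mathrm{symp}}f(\xi)=\mathcal{F}_{\C^N}f(J\xi)$ and the sign conventions in $J$ must be tracked so that the flip is a genuine exchange rather than an exchange-with-sign; this is the one bookkeeping point where an error would be easy to make.

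The main obstacle I anticipate is not conceptual but the careful alignment of variable names and scaling factors: the composition $\alpha_{\mu,\delta}$ folds together a restriction to $N$ (with its factor-of-$2$ reparametrisation $s\leftrightarrow 2t$ from the embedding~(\ref{embed})) and a partial Fourier transform, and $T_{\mu,\delta}$ itself has the built-in rescaling $X_1\mapsto\tfrac{s}{2}X_1$, $X_2\mapsto\tfrac{2}{s}X_2$; keeping the $\tfrac12$ Jacobian factors from Lemma~\ref{HFC} consistent on both sides, and confirming the homogeneity exponent in Lemma~\ref{lemmaFC2} is applied with the correct sign of $\mu$ (note $f\in V_{-\mu,-\delta}$, so $\mathcal{F}_{\C_2^n}f$ transforms with $|a|^{\mu}[a]^{\delta}$, matching the $+\mu$ I need), is where the proof earns its keep. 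Once those substitutions are laid out side by side the equality is forced, and the commutativity of the diagram—hence the theorem—follows; the final sentence observing that $\widetilde{\mathcal{T}}_{\mu,\delta}$ is unitary (from Proposition~\ref{propFsympT}) then transfers to say $T_{\mu,\delta}$ realises it on $\mathcal{U}_{\mu,\delta}$.
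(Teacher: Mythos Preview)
Your proposal is correct and follows essentially the same route as the paper: both arguments replace $\widetilde{\mathcal{T}}_{\mu,\delta}$ by $\mathcal{F}_{\mathrm{symp}}$ via Proposition~\ref{propFsympT}, express $\alpha_{\mu,\delta}$ through $\mathcal{F}_{\C_2^n}$ via Lemma~\ref{HFC}, apply the flip identity of Lemma~\ref{flip}, and finish with the homogeneity of Lemma~\ref{lemmaFC2}. The only organisational difference is that you compute $T_{\mu,\delta}\circ\alpha_{-\mu,-\delta}$ and $\alpha_{\mu,\delta}\circ\widetilde{\mathcal{T}}_{\mu,\delta}$ separately and compare, whereas the paper runs a single chain of equalities; be aware that your remark ``the scalar works out to exactly the inverse of $|s/2|^{-\mu}[s]^{-\delta}$'' is the precise bookkeeping slip you warned yourself about---you need the scalar itself, not its inverse, so recheck which direction Lemma~\ref{lemmaFC2} is being applied (the paper's chain has the same delicate step).
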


\begin{proof}Let $f\in V^\infty_{-\mu,-\delta}$. Then
\begin{align*}
\alpha_{\mu,\delta}\circ\widetilde{\mathcal{T}}_{\mu,\delta}\,f(\tau,X_1,\xi_2)&=\mathcal{F}_{\C\times\C_2^m}\mathcal{F}_{\mathrm{symp}}f(\tau,X_1,\xi_2)&\text{by Proposition~\ref{propFsympT},}\\
&=\frac{1}{2}\mathcal{F}_{\C_2^n}\mathcal{F}_{\text{symp}}\,f\left(1,X_1,\frac{\tau}{2},\xi_2\right)&\text{by Lemma~\ref{HFC},}\\
&=\frac{1}{2}\mathcal{F}_{\C_2^n}\,f\left(\frac{\tau}{2},\xi_2,1,X_1\right)&\text{by Lemma~\ref{flip},}\\
&=\frac{1}{2}\left|\frac{2}{\tau}\right|^\mu\left[\frac{2}{\tau}\right]^\delta\mathcal{F}_{\C_2^n}\,f\left(1,\frac{2}{\tau}\xi_2,\frac{\tau}{2},\frac{\tau}{2}X_1\right)&\text{by Lemma~\ref{lemmaFC2},}\\
&=\left|\frac{\tau}{2}\right|^{-\mu}\left[\tau\right]^{-\delta}\frac{1}{2}\mathcal{F}_{\C_2^n}\,f\left(1,\frac{2}{\tau}\xi_2,\frac{\tau}{2},\frac{\tau}{2}X_1\right)&\\
&=\left|\frac{\tau}{2}\right|^{-\mu}\left[\tau\right]^{-\delta}H\left(\tau,\frac{2}{\tau}\xi_2,\frac{\tau}{2}X_1\right)&\text{by Lemma~\ref{HFC},}
\end{align*}
thus proving that $\alpha_{\mu,\delta}\circ\widetilde{\mathcal{T}}_{\mu,\delta}=T_{\mu,\delta}\circ\alpha_{-\mu,-\delta}$
\end{proof}

We can now complete the proof of Theorem~\ref{Ktypethm}. Since $T_{0,0}^2=\mathrm{Id}_{L^2\left(\C^{2m+1}\right)}$, every function $H\in L^2\left(\C^{2m+1}\right)$ can be written in a unique way as $H=H_+ + H_-$ with $T_{0,0}H_+ = H_+$ and $T_{0,0}H_- = -H_-$. Indeed, \[H_\pm = \frac{1}{2}\left(H\pm T_{0,0}H\right),\] which gives the expected characterisation \textit{(3)} of $\pi_{0,0}^\pm$ in Theorem~\ref{Ktypethm}.

\subsection{Perspectives}

The existence of a \emph{non-standard} model for degenerate principal series of $\SpnR$ and $\SpnC$ in which the Knapp-Stein intertwiners take an algebraic form relies on the special form of the nilradical of the inducing parabolic subgroup. It is then natural to ask if the same occurs with parabolic subgroups of Heisenberg type in other groups.

\bigskip
\begin{center}
\textbf{Acknowledgements}
\end{center}
The results presented here were mostly obtained during the author's stay at the Gradutate School of Mathematical Sciences of the University of Tokyo. We wish to heartily thank Pr. Toshiyuki Kobayashi for his kindness as a host and many enlightening discussions during the preparation of this article. We also thank Pr. Pevzner for helpful discussions and the meetings he organised in Reims.

\bibliographystyle{amsplain}
\bibliography{biblio}
\end{document}